\documentclass[12pt, english]{amsart}

\oddsidemargin -0.25cm \evensidemargin -.5cm \topmargin 0cm
\textheight 23cm \textwidth 16.5cm
\usepackage{times}

\usepackage{babel}
\usepackage{amsopn}
\usepackage{amsmath,amsthm,amssymb}

%\documentstyle [11pt]{article}
%\parindent=.5cm
%\oddsidemargin=0in
%\textwidth=6in
%\textheight=7.5in
%\topmargin=0in
%
%\usepackage[active]{srcltx}

% \usepackage[pdftex,colorlinks]{hyperref}

% If you plan to use dvipdfm include
%\usepackage[dvi,colorlinks]{hyperref}
\usepackage[hypertex]{hyperref} %%%% Incluye los links en el dvi

%\usepackage[pdftex]{hyperref}  %%%% Incluye los links en el pdf

% If you plan to use ps2pdf include
%\usepackage[hyperref,colorlinks]{hyperref}

\newcommand{\nc}{\newcommand}
%\nc{\ad}{{\rm ad}}

\newcommand{\fa}{{\mathfrak a}}

\newcommand{\ff}{{\mathfrak f}}
\newcommand{\fg}{{\mathfrak g}}
\newcommand{\fh}{{\mathfrak h}}

\newcommand{\fk}{{\mathfrak k}}
\newcommand{\fl}{{\mathfrak l}}

\nc{\fo}{{\mathfrak o}}

\newcommand{\fs}{{\mathfrak s}}
\newcommand{\fu}{{\mathfrak u}}
\newcommand{\fv}{{\mathfrak v}}

\newcommand{\fz}{{\mathfrak z}}
\newcommand{\cc}{{\mathbb C}}
\nc{\aff}{\fa\ff\ff}
%\nc{\ff}{{\Bbb F}}

\nc{\nn}{{\mathbb N}}
\newcommand{\rr}{{\mathbb R}}
\newcommand{\RR}{{\mathbb R}}
\nc{\qq}{\mathbb Q} \nc{\zz}{{\mathbb Z}}
\nc{\ad}{\operatorname{ad}}

\newtheorem{teo}{Theorem}[section]
\newtheorem{lem}[teo]{Lemma}
\newtheorem{prop}[teo]{Proposition}
\newtheorem{corol}[teo]{Corollary}
\newtheorem{rem}[teo]{Remark}
\newtheorem{example}{Example}

\nc{\lp}{\langle} \nc{\rp}{\rangle} \nc{\inc}{\hookrightarrow}

\title[ ]{\bf Abelian hermitian geometry }
\author{A. Andrada}
\date{}
\address{FaMAF-CIEM, Universidad Nacional de C\'{o}rdoba, Ciudad Universitaria, 5000 C\'{o}rdoba, Argentina}
\email{andrada@famaf.unc.edu.ar}
\thanks{}

\author{M. L. Barberis}
\email{barberis@famaf.unc.edu.ar}
\author{I. G. Dotti}
\email{idotti@famaf.unc.edu.ar}

\subjclass[2010]{53C15, 53B35, 53C30}
%\date{ }
\keywords{Abelian complex structure, Hermitian metric, K\"ahler metric, first canonical connection}
\thanks{The authors were partially supported by
CONICET, ANPCyT and SECyT-UNC (Argentina).}

\begin{document}

\begin{abstract} 
We study the structure of Lie groups admitting left invariant abelian complex structures in terms of commutative associative algebras. If, in addition, the Lie group is equipped with a left invariant Hermitian structure, it turns out that such a Hermitian structure is K\"ahler if and only if the Lie group is the direct product of several copies of the real hyperbolic plane by a euclidean factor. Moreover, we show that if a left invariant Hermitian metric on 
a Lie group  with an abelian complex structure has flat first canonical connection, then the Lie group is abelian.
\end{abstract}

\maketitle

\section{Introduction}

An {\it abelian} complex structure on a real Lie algebra $\fg$ is
an endomorphism $J$ of $\fg$ satisfying
\[ J^2=-I, \hspace{1.5cm}
[Jx,Jy]=[x,y], \; \;\; \forall\, x,y \in \fg, \label{abel} \] or
equivalently, the $i$-eigenspace of $J$ in the complexification
$\fg ^{\cc}$ of $\fg$ is an abelian subalgebra of $\fg ^{\cc}$.
 If $G$
is a Lie group with Lie algebra $\fg$ these conditions imply the
vanishing of the Nijenhuis tensor on the invariant almost complex
manifold $(G,J)$, that is, $J$ is integrable on $G$. If $\Gamma
\subset G$ is any discrete subgroup of $G$ then the induced  $J$
on $\Gamma \backslash G$ will be also called invariant.

Our interest arises from properties of the complex manifolds
obtained by considering this class of complex structures on Lie
algebras. For instance, an abelian hypercomplex structure on
$\fg$, that is, a pair of anticommuting abelian complex
structures, gives rise to an invariant weak HKT structure on $G$
(see \cite{cqg} and \cite{gp}).
%
%Also, in \cite{AD} a procedure is given to construct hypersymplectic
%structures on $\RR^{4n}$ by using abelian complex product
%structures. The resulting metrics are complete, not necessarily
%flat, and invariant by a 3-step nilpotent double Lie group. In
%\cite{BDV} the canonical bundle of a nilmanifold is considered and
%In
%\cite{BDV}
It was proved in \cite{BDV} that the converse of this result holds
for nilmanifolds, that is,  the hypercomplex structure associated
to a nilmanifold carrying an HKT structure is necessarily abelian.
 Also, in \cite{c,cfp,mpps} it was shown that invariant abelian complex structures
on nilmanifolds have a locally complete family of deformations
consisting of invariant complex structures.

In the first part of the article, we consider a distinguished class of Lie algebras admitting 
abelian complex structures, given by abelian double products.  
An abelian double product is a Lie algebra $\fg$
together with an abelian complex structure $J$ and a decomposition
 $\fg = \fu \oplus J \fu$, where both, $\fu$ and 
$J\fu$ are abelian subalgebras. The structure of these Lie algebras can be described in terms of 
a pair of commutative associative algebras satisfying a
compatibility condition (see \cite{AD,AS}). When $\fu$ is an ideal of
$\fg$ one obtains $\aff ({\mathcal A})$, where ${\mathcal A}$ is a commutative
associative algebra, a class of Lie algebras considered in
\cite{ba-do}. On the other hand, when $\fg$ is a Lie algebra with an abelian complex structure $J$, we prove in Theorem \ref{aff} that if $\fg$ decomposes as $\fg= \fu+J\fu$, with $\fu$ an abelian 
subalgebra, then $\fg$ is an abelian double product. Moreover, if $\fu$ is an abelian ideal and $\fg'\cap J\fg'=\{0\}$, then $\fg$ is an affine Lie algebra $\aff ({\mathcal A})$, for some commutative
associative algebra ${\mathcal A}$.

In the second part, the Hermitian geometry of Lie groups equipped with abelian complex structures is studied. First, and related with results mentioned above, we obtain in Theorem~\ref{kahler-abelian} and Corollary~\ref{kahler-group} that such a Lie group satisfying the K\"ahler condition is the product of hyperbolic spaces and a flat factor. Second, we investigate the first canonical Hermitian connection on Lie groups associated to a left invariant Hermitian structure with  abelian complex structure. This connection, which was introduced by Lichnerowicz in \cite{Li} and appears in the set of canonical Hermitian connections of Gauduchon \cite{Ga}, has torsion tensor of type $(1,1)$ with respect to the complex structure. In \cite{ABD}, the notion of abelian complex structure on Lie groups was extended to parallelizable manifolds. This generalization amounts to the existence of a complex connection on the complex manifold with trivial holonomy and torsion of type $(1,1)$ with respect to the complex structure. This motivates our study of the flatness of the first canonical Hermitian connection in the abelian case. We prove that if the first canonical connection associated to a left invariant Hermitian structure with  abelian complex structure is flat, then the Lie group is abelian.

\section{Preliminaries}\label{prelim}

In this section we will recall  basic definitions and known
results which will be used throughout the paper.

\subsection{Connections on Lie algebras} Let $G$ be a Lie group with Lie algebra $\fg$ and
suppose that $G$ admits a left invariant affine connection
$\nabla$, i.e., each left translation is an affine transformation
of $G$. In this case, if $X,Y$ are two left invariant vector
fields on $G$ then $\nabla_XY$ is also left invariant. Moreover,
there is a one--one correspondence between the set of left invariant connections on $G$ and the set of $\fg$-valued
bilinear forms $\fg \times \fg \to \fg$ (see \cite[p.102]{He}). Therefore,
such a bilinear form $\nabla:\fg\times \fg\to \fg$ will be called
a connection on $\fg$. The torsion $T$ and the curvature of
$\nabla$ are defined as follows:
\begin{align*}
T(x,y) & =\nabla_xy-\nabla_yx-[x,y]\\
R(x,y) &=\nabla_x\nabla_y-\nabla_y\nabla_x-\nabla_{[x,y]},
\end{align*}
for any $x,y\in\fg$. The connection $\nabla$ is torsion-free when
$T=0$, and $\nabla$ is flat when $R=0$. We note that in the flat
case, $\nabla:\fg\to \operatorname{End}(\fg)$ is a representation
of $\fg$.

The connection $\nabla$ on $\fg$ defined by $\nabla=0$ is known as
the $(-)$-connection. Its torsion $T$ is given by $T(x,y)=-[x,y]$
for all $x,y\in\fg$ and clearly $R=0$.

\subsection{Complex structures on Lie algebras}\label{complex}

A complex structure on a Lie algebra $\mathfrak g$ is an
endomorphism $J$ of $\mathfrak g$ satisfying $J^2= -I$ together
with the vanishing of the Nijenhuis bilinear form with values in
$\mathfrak g$,
\begin{equation}\label{nijen} N(x,y)= [Jx,Jy]-J[Jx,y]- J[x,Jy] -[x,y],\end{equation}
where $x,y\in \mathfrak g$.

If $G$ is a Lie group with Lie algebra $\fg$, by left translating
the endomorphism $J$ we obtain a  complex manifold $(G,J)$ such
that left translations are holomorphic maps. A complex structure
of this kind is called left invariant. We point out that $(G,J)$
is not necessarily a complex Lie group since right translations
are not in general holomorphic.

Consider a Lie algebra $\fg$ equipped with a complex structure
$J$. We denote with $ \fg'$ the commutator ideal $[\fg ,\fg]$ of
$\fg$ and with $\fg'_J$ the $J$-stable ideal of $\fg$ defined by
$\fg'_J:=\fg'+J\fg'$.

\smallskip

We will consider connections on $\fg$ compatible with the complex
structure $J$, and therefore we will say that a connection
$\nabla$ on $\fg$ is complex when $\nabla J=0$, that is,
$\nabla_xJ=J\nabla_x$ for any $x\in\fg$.

The torsion $T$ of a connection $\nabla$ on $\fg$ is said to be of
type $(1,1)$ if $T(Jx,Jy)=T(x,y)$ for all  $x,y$ on $\fg$.

\medskip

\subsection{Abelian complex structures} \label{secaff}

An {\it abelian} complex structure on $\fg$ is an endomorphism $J$
of $\fg$ satisfying
\begin{equation} J^2=-I, \hspace{1.5cm} [Jx,Jy]=[x,y], \; \;\;
\forall x,y \in \fg. \label{abel1}
\end{equation}
It follows that condition \eqref{abel1} is a particular case of
\eqref{nijen}. We note that the $(-)$-connection on $\fg$ has
torsion of type $(1,1)$ if and only if $J$ is abelian.

Abelian complex structures 
have been studied by several authors \cite{cfp, cfu, mpps, V}. 
A complete classification of the Lie algebras admitting abelian
complex structures is known up to dimension $6$ (see \cite{ABD1})
and there are structure results for arbitrary dimensions (see
\cite{ba-do}).

\medskip

The next result states some properties of Lie algebras admitting
abelian complex structures that will be used in forthcoming
sections.

\begin{lem}\label{cap}
Let $J$ be an abelian complex structure on the Lie algebra $\fg$.
Then:
\begin{enumerate}
\item[(i)] the center $\fz$ of $\fg$ is $J$-stable;
\item[(ii)] for any $x\in\fg$, $\ad_{Jx}=-\ad_xJ$;
\item[(iii)]  $\fg'$ is abelian, equivalently, $\fg$ is $2$-step
solvable;
\item[(iv)]  $J\fg'$ is an abelian subalgebra;
\item[(v)] $\fg'\cap J\fg'\subseteq \fz ( \fg '_J)$.
\end{enumerate}
\end{lem}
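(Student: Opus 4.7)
The plan is to prove the five items in the order (ii), (i), (iii), (iv), (v), since (i), (iv), (v) each follow quickly from (iii) together with (ii), while (iii) is the substantive step. For (ii), starting from the abelian identity $[Jx,Jy]=[x,y]$, I would substitute $y$ with $-Jy$; using $J^{2}=-I$ this gives $[Jx,y]=-[x,Jy]$ for all $x,y\in\fg$, which is precisely $\ad_{Jx}=-\ad_{x}J$. Part (i) is then immediate: if $x\in\fz$ then $\ad_{x}=0$, so $\ad_{Jx}=-\ad_{x}J=0$ and hence $Jx\in\fz$.

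The heart of the lemma is (iii), asserting $[\fg',\fg']=0$. The cleanest route is to pass to the complexification $\fg^{\cc}=\fg^{1,0}\oplus\fg^{0,1}$ and observe that the abelian hypothesis on $J$ is precisely the statement that each eigenspace $\fg^{1,0}$ (and hence also $\fg^{0,1}$) is an abelian subalgebra of $\fg^{\cc}$. Thus $\fg^{\cc}$ is a vector-space sum of two abelian subalgebras, and by a classical structural theorem (the Lie-algebra analog of It\^{o}'s theorem for groups), such a Lie algebra is metabelian. Restricting to the real form yields $[\fg',\fg']=0$. Alternatively, one may give a purely real-variable proof by expanding $[[x,y],[z,w]]$ via the Jacobi identity and repeatedly applying (ii) together with the symmetry $[x,Jy]=[y,Jx]$ (an immediate consequence of (ii) and antisymmetry), in order to move all factors of $J$ to the outside of the expression until it collapses.

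Parts (iv) and (v) then follow quickly. For (iv), given $a,b\in\fg'$, the abelian identity combined with (iii) gives $[Ja,Jb]=[a,b]=0$, so $J\fg'$ is abelian and in particular a Lie subalgebra. For (v), if $x\in\fg'\cap J\fg'$, then $x\in\fg'$ commutes with all of $\fg'$ by (iii), and $x\in J\fg'$ commutes with all of $J\fg'$ by (iv); hence $x$ lies in the center of $\fg'_{J}=\fg'+J\fg'$. The principal obstacle is therefore (iii): the complexification route delegates the work to a classical but nontrivial structural theorem, while the direct real-variable proof demands careful bookkeeping of the $J$-twists introduced by (ii).
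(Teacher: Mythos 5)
Your proposal is correct and follows essentially the same route as the paper: items (i), (ii), (iv), (v) are the same short verifications, and for the key item (iii) you invoke the classical theorem that a Lie algebra which is a vector-space sum of two abelian subalgebras (here the $\pm i$-eigenspaces of $J$ in $\fg^{\cc}$) is metabelian, which is exactly the result of Petravchuk cited in the paper's proof. The only difference is cosmetic: you spell out the routine parts that the paper calls ``straightforward.''
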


\begin{proof}
(i) and (ii) are straightforward. (iii) is a consequence of
results in  {\cite{Pe}}. Using (iii) and
the fact that $J$ is abelian, (iv) follows.

If $x\in\fg'\cap J\fg'$, then (iii) and (iv) imply that
$[x,\fg']=0 =[x,J\fg']$, thus $x\in\fz( \fg '_J)$ and (v) holds.
\end{proof}

\medskip

\subsection{Hermitian Lie algebras} Let $\fg$ be a Lie algebra endowed with an inner product
$g$. A connection $\nabla$ on $\fg$ is called metric if $\nabla
g=0$, that is, $\nabla_x$ is a skew-symmetric endomorphism of
$\fg$ for any $x\in\fg$. When $\fg$ is solvable and the connection
$\nabla$ is metric and flat, we have the following consequence:

\begin{lem}\label{flat-metric}
Let $\fg$ be a solvable Lie algebra equipped with an inner product
$g$ and a flat metric connection $\nabla$. Then $\{\nabla _x :
x\in \fg\}$ is a commutative family of skew-symmetric
endomorphisms of $\fg$ and $\nabla _v=0$ for every $v\in \fg '$.
\end{lem}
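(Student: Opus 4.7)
The plan is to recast the hypotheses as the statement that $x\mapsto\nabla_x$ is a Lie algebra representation of $\fg$ on itself by skew-symmetric operators, and then to exploit the fact that a solvable subalgebra of $\mathfrak{so}(\fg,g)$ must be abelian.

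First, flatness means $[\nabla_x,\nabla_y]=\nabla_{[x,y]}$ for all $x,y\in\fg$, i.e.\ $\nabla\colon\fg\to\operatorname{End}(\fg)$ is a Lie algebra homomorphism. Metricity says that each $\nabla_x$ is skew-symmetric with respect to $g$, so the image $\fh:=\{\nabla_x:x\in\fg\}$ is a Lie subalgebra of $\mathfrak{so}(\fg,g)$; being a quotient of $\fg$, it is solvable.

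The heart of the argument is therefore the claim that every solvable subalgebra $\fh$ of $\mathfrak{so}(\fg,g)$ is abelian. I would complexify and apply Lie's theorem: $\fh^{\cc}\subseteq\mathfrak{so}(\fg^{\cc},g^{\cc})\subseteq\mathfrak{gl}(\fg^{\cc})$, and Lie's theorem applied to the solvable Lie algebra $\fh^{\cc}$ acting on $\fg^{\cc}$ implies that every element of $[\fh^{\cc},\fh^{\cc}]$ acts on $\fg^{\cc}$ as a nilpotent endomorphism. On the other hand, every element of $\mathfrak{so}(\fg,g)$ is diagonalizable over $\cc$ (its eigenvalues being purely imaginary), and the same therefore holds for every element of $[\fh^{\cc},\fh^{\cc}]\subseteq\mathfrak{so}(\fg^{\cc},g^{\cc})$. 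A diagonalizable nilpotent endomorphism is zero, so $[\fh^{\cc},\fh^{\cc}]=0$ and hence $\fh$ is abelian. An equally clean alternative is to note that $(A,B)\mapsto-\operatorname{tr}(AB)$ restricts to a positive definite $\ad$-invariant inner product on $\mathfrak{so}(\fg,g)$, so that $\fh$ is reductive; a solvable reductive Lie algebra is abelian.

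Finally, the commutativity of the family combined with flatness yields the second assertion at once: for all $x,y\in\fg$, $\nabla_{[x,y]}=[\nabla_x,\nabla_y]=0$, so $\nabla_v=0$ for every $v\in\fg'$. The only substantive step is the intermediate fact that solvable subalgebras of $\mathfrak{so}(\fg,g)$ are abelian; once this is in hand, the rest follows from the definitions.
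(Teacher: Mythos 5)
Your proof is correct and takes essentially the same route as the paper, which likewise observes that flatness makes $\nabla$ a representation with solvable image inside $\fs\fo(\fg)$, invokes the fact that a solvable subalgebra of $\fs\fo(\fg)$ is abelian (asserted there without proof, supplied by you), and finishes with $\nabla_{[x,y]}=[\nabla_x,\nabla_y]=0$. One small wording repair in your Lie-theorem variant: not every element of $\fs\fo(\fg^{\cc},g^{\cc})$ is diagonalizable, so apply ``nilpotent and semisimple implies zero'' to the real elements of $[\fh,\fh]\subseteq\fs\fo(\fg,g)$, which span $[\fh^{\cc},\fh^{\cc}]$; your trace-form alternative is already airtight.
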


\begin{proof}
Since $\nabla$ is flat it defines a representation of $\fg$ on
itself, hence the image of $\nabla$ is a solvable Lie subalgebra
of $\fg \fl(\fg)$. Moreover, since $\nabla g=0$, this solvable Lie
subalgebra is contained in $\fs\fo(\fg)$, therefore it is abelian.
Now $\nabla _{[x,y]}= [\nabla _x , \nabla _y ]=0$, and the lemma
follows.
\end{proof}

\medskip

The Levi-Civita connection $\nabla^g$ associated to $g$ is the
only torsion-free metric connection on $\fg$, and it is given by
\begin{equation}\label{LC}
2g\left(\nabla^g_x y,z \right)= g\left([x, y],z \right)- g\left([
y,z],x \right)+ g\left([z,x],y \right), \qquad x,y,z \in \fg.
\end{equation}

If $J$ is a complex structure on $\fg$ compatible with $g$, that
is, $g(x,y) = g (Jx,Jy) $ for all $x,y \in \fg$, then $(J,g)$ is
called a Hermitian structure on $\fg$ and the triple $(\fg, J, g)$
is a Hermitian Lie algebra. The associated K\"ahler form $\omega$
is defined by $\omega (x,y)= g(Jx,y)$.  If $G$ is a Lie group with
Lie algebra $\mathfrak g$, by left-translating $J$ and the inner
product $g$ we give to $G$ a left invariant Hermitian structure. A
Hermitian Lie algebra $(\mathfrak g, J, g)$  is called
\textit{K\"ahler} if $\nabla^g$ is a complex connection or,
equivalently, $d\omega=0$, where \begin{equation}\label{d-omega}
d\omega(x,y,z)=-\omega([x,y],z)-\omega([y,z],x)-\omega([z,x],y),
\qquad x,y,z \in \fg.\end{equation}

\

\section{Abelian double products and affine Lie algebras}

In this section we consider a particular class of Lie algebras
equipped with abelian complex structures, the so called abelian
double products, and as a consequence of the main result of this
section (Theorem \ref{aff}) we obtain that $\fg'_J$ belongs to
this class, whenever $\fg$ is a Lie algebra and $J$ an abelian
complex structure.

\medskip

A \textit{ complex product structure} on $\fg $ is a complex
structure $J$ together with a decomposition  $\fg = \fu\oplus
J\fu$, where both, $\fu $ and $J\fu $, are Lie subalgebras of
$\fg$.
%In particular, $(\fg, \fu , J\fu)$ form a double Lie algebra.
If $G$ is a Lie group with Lie algebra $\fg$, the
subalgebras $\fu$ and $J\fu$ give rise to involutive distributions
$TG^+$ and $TG^-$, respectively,  such that $TG= TG^+ \oplus TG^-$
and $J (TG^+)=TG^-$, where $J$ is the induced left invariant
complex structure on $G$. We obtain in this way a left invariant
complex product structure on $G$ (see \cite{AS}). 
% These structures are also known in the literature as para-hypercomplex structures \cite{BV}.

It was proved in \cite{AD} that the Lie subalgebras $\fu $ and
$J\fu $ are abelian if and only if the complex structure $J$ is
abelian. In this case we will say that $\fg$ is an {\em abelian
double product}.

\

\begin{example} \label{ab_doub_aff}
{\rm  The Lie algebra of the affine motion group of $\cc$, denoted
$\aff (\cc)$, is an abelian double product with respect to two
non-equivalent abelian complex structures. Indeed, $\aff (\cc)$
has a basis $\{e_1,e_2,e_3,e_4\}$ with the following Lie bracket
\begin{equation}\label{corch_aff(C)}
[e_1,e_3]=e_3,\quad [e_1,e_4]=e_4, \quad [e_2,e_3]=e_4,\quad
[e_2,e_4]=-e_3. \end{equation} Any abelian complex structure on
$\aff (\cc)$ is equivalent to one and only one of the following
(see \cite[Proposition 2.6]{ABD1}):
\[ J_1e_1=-e_2,\qquad J_1e_3=e_4, \qquad \text{ or }\qquad J_2e_1=e_3,\qquad J_2e_2=e_4. \]
It follows  that $\aff (\cc )$ is an abelian double product with
respect to both $J_1$ and $J_2$, by setting $\fu _1 =\text{span}
\{ e_1+e_3, e_2+e_4\}$ and  $\fu_2 =\text{span} \{ e_1, e_2\}$,
respectively. }\end{example}

\

We introduce next a family of Lie algebras which exhausts the
class of abelian double products. They are obtained by considering
a finite dimensional real vector space $\mathcal A$ with two
structures of commutative associative algebra, $(\mathcal
A,\cdot)$ and $(\mathcal A,\ast)$, such that both products satisfy
the compatibility conditions
\begin{equation}\label{comp} a\ast (b\cdot
c)= b \ast (a\cdot c), \quad \quad a\cdot (b\ast c)= b\cdot(a \ast
c),\end{equation} for every $a, b, c \in \mathcal A$. Then,
$\mathcal A \oplus \mathcal A$ with the bracket:
\begin{equation}\label{mono} [(a,a'),(b,b')]=(- (a \ast b'-b\ast a'),
a\cdot b'-b\cdot a'), \hspace{1cm} a,b,a',b' \in \mathcal A,
\end{equation}
and the endomorphism $J$ defined by
\begin{equation}\label{standard}
J(a,a')=(-a',a),    \hspace{1cm} a,a'  \in \mathcal A ,
\end{equation}
is an abelian double product  that will be denoted $(\mathcal A ,
\cdot)\bowtie (\mathcal A , \ast)$ (see \cite{AS}). $J$ will be
called the {\em standard} complex structure on $(\mathcal A ,
\cdot)\bowtie (\mathcal A , \ast)$. Setting $\fu=\mathcal A \oplus
\{0\}$, it turns out that $J \fu= \{0\} \oplus \mathcal A$ and
both, $\fu $ and $J\fu $, are abelian Lie subalgebras of
$(\mathcal A , \cdot)\bowtie (\mathcal A , \ast)$. \footnote{ We
point out that the double product $(\mathcal A , \cdot)\bowtie
(\mathcal A , \ast)$ is a nilpotent Lie algebra if and only if
both $(\mathcal A , \cdot)$ and $(\mathcal A , \ast)$ are
nilpotent commutative  associative algebras \cite{A6}. }

Consider an abelian double product $(\mathcal A , \cdot ) \bowtie
(\mathcal A , \ast )$. In the special case when $\ast$ is the
trivial product in $\mathcal A$, the corresponding  Lie algebra is
called an {\em affine Lie algebra} and is denoted by
$\aff(\mathcal A)$. Two particular cases occur when $(\mathcal A,
\cdot)=\rr$ or $\cc$, obtaining in this way the Lie algebra of the
group of affine motions of either ${\rr}$ or ${\cc}$. The family
of Lie algebras $\aff(\mathcal A)$ where $\mathcal A$ is an
arbitrary associative algebra, not necessarily commutative, was
considered in \cite{ba-do}.

Condition \eqref{comp} is also satisfied in the particular case
when $a \ast b = a \cdot b$ for all $a, b \in \mathcal A$. In this
situation, we obtain:

\begin{lem}\label{iso}
Given a commutative associative algebra $(\mathcal A,\cdot)$,
there is a holomorphic Lie algebra isomorphism $\beta: (\mathcal
A, \cdot)\bowtie (\mathcal A, \cdot)\to \aff(\mathcal A)$ with
respect to the standard complex structures, given by
$\beta(a,b)=(a+b,-a+b)$, $a,b \in \mathcal A$.
\end{lem}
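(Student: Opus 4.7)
The plan is to verify, in order, that the explicit linear map $\beta(a,b)=(a+b,-a+b)$ is (i) a linear isomorphism, (ii) a Lie algebra homomorphism, and (iii) holomorphic with respect to the two standard complex structures.

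For (i), the inverse is straightforwardly given by $\beta^{-1}(x,y)=\left(\tfrac{x-y}{2},\tfrac{x+y}{2}\right)$, so $\beta$ is a linear bijection. For (iii), both standard complex structures act as $(x,y)\mapsto(-y,x)$ by formula \eqref{standard}, so the identity $\beta(J(a,b))=J(\beta(a,b))$ is a direct unpacking: $\beta(-b,a)=(a-b,a+b)=J(a+b,-a+b)$.

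The only substantive step is (ii), and it reduces to a short computation with bracket \eqref{mono}. On the source $(\mathcal A,\cdot)\bowtie(\mathcal A,\cdot)$ one has $\ast=\cdot$, so
\[
[(a,b),(c,d)]=\bigl(-(a\cdot d-c\cdot b),\ a\cdot d-c\cdot b\bigr),
\]
and therefore $\beta([(a,b),(c,d)])=\bigl(0,\,2(a\cdot d-c\cdot b)\bigr)$. On the target $\aff(\mathcal A)=(\mathcal A,\cdot)\bowtie(\mathcal A,0)$ the product $\ast$ is trivial, so
\[
[\beta(a,b),\beta(c,d)]=[(a+b,-a+b),(c+d,-c+d)]=\bigl(0,\,(a+b)\cdot(-c+d)-(c+d)\cdot(-a+b)\bigr).
\]
Expanding the second coordinate and using commutativity of $\cdot$, the four mixed terms cancel in pairs, leaving $2(a\cdot d-c\cdot b)$, matching $\beta([(a,b),(c,d)])$.

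There is no real obstacle here: once the correct map $\beta$ is written down, each of the three verifications is a one-line check, the only mildly computational point being the cancellation in the bracket identity, which is powered entirely by the commutativity of $(\mathcal A,\cdot)$. The associativity of $\mathcal A$ is implicitly used only to ensure that the compatibility condition \eqref{comp} with $\ast=\cdot$ holds, so that the source Lie algebra is well-defined in the first place.
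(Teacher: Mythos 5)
Your proof is correct: the bracket computation, the holomorphy check $\beta(J(a,b))=(a-b,a+b)=J\beta(a,b)$, and the explicit inverse are all verified accurately, and the remark that commutativity drives the cancellation while associativity only guarantees the source algebra is well defined is exactly right. The paper states Lemma \ref{iso} without proof, treating it as a routine verification, and your argument is precisely the direct check that the authors leave implicit.
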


\smallskip

The  next  result,  which is a consequence of \cite[Proposition
6.1]{AD} and  \cite[Theorem 4.1]{AS}, shows that any  abelian
double product is obtained as in \eqref{mono} with the standard
abelian complex structure defined in \eqref{standard}.

\begin{prop} \label{assoc-comm}
Let $\fg= \fu \oplus J\fu$ be an abelian double product. Then the
Lie bracket in $\fg$ induces two structures of commutative
associative algebras in $\fu$, $(\fu , \cdot ), \; (\fu , \ast)$,
such that $\fg= (\fu , \cdot ) \bowtie (\fu , \ast )$.
\end{prop}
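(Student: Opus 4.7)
The plan is to read off both products from the bracket $[a, Jb]$, using that the only nontrivial brackets between $\fu$ and $J\fu$ are of this form. Since $\fu$ is abelian and, because $J$ is abelian, $J\fu$ is as well, for $a, b, a', b' \in \fu$ we have $[a + Ja', b + Jb'] = [a, Jb'] + [Ja', b]$, and I would define two $\rr$-bilinear products on $\fu$ by decomposing $[a, Jb] = -(a \ast b) + J(a \cdot b)$ into its $\fu$- and $J\fu$-components. Commutativity of both products is then immediate from Lemma \ref{cap}(ii): the identity $[Ja, b] = -[a, Jb]$ gives $[b, Ja] = [a, Jb]$, and comparing the $\fu$- and $J\fu$-parts yields $a \cdot b = b \cdot a$ and $a \ast b = b \ast a$. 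With commutativity in hand, Lemma \ref{cap}(ii) lets me rewrite $[Ja', b] = -[a', Jb]$, and a direct substitution shows that $[a + Ja', b + Jb']$ matches formula \eqref{mono} exactly, so the isomorphism $\fg \cong (\fu, \cdot) \bowtie (\fu, \ast)$ is built into the definitions.

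Associativity of both products and the compatibility conditions \eqref{comp} should drop out of the Jacobi identity applied to two types of triples. For $(a, Jb, Jc)$ with $a, b, c \in \fu$ the term $[Jb, Jc] = [b, c]$ vanishes, so Jacobi collapses to $[[a, Jb], Jc] = [[a, Jc], Jb]$; substituting the definition of $[a, Jb]$ and using that $[J(a \cdot b), Jc] = [a \cdot b, c] = 0$, I expect to obtain
\[ (a \ast b) \ast c = (a \ast c) \ast b, \qquad (a \ast b) \cdot c = (a \ast c) \cdot b. \]
For $(a, Jb, c)$ the term $[c, a]$ vanishes, so Jacobi reads $[[a, Jb], c] = -[[Jb, c], a]$; converting every $[J\cdot\,, \cdot]$ into $-[\cdot\,, J\cdot]$ via Lemma \ref{cap}(ii) I expect
\[ (a \cdot b) \ast c = (b \cdot c) \ast a, \qquad (a \cdot b) \cdot c = (b \cdot c) \cdot a. \]
Combined with the commutativity already established, the ``$\cdot$-parts'' of these four identities yield associativity of $\cdot$ (from $(a \cdot b) \cdot c = (b \cdot c) \cdot a = a \cdot (b \cdot c)$) and the compatibility $a \cdot (b \ast c) = b \cdot (a \ast c)$, while the ``$\ast$-parts'' yield associativity of $\ast$ and the compatibility $a \ast (b \cdot c) = b \ast (a \cdot c)$.

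The main obstacle is purely computational bookkeeping: tracking the minus signs that appear when expanding $[a, Jb] = -(a \ast b) + J(a \cdot b)$ inside iterated brackets, and recognizing that each raw identity becomes, after relabelling and applying commutativity, exactly one of the four required properties. No further geometric input beyond Lemma \ref{cap}(ii) and Jacobi is needed once the definitions are in place.
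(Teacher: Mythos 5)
Your proposal is correct and follows essentially the same route as the paper: the products you extract from the decomposition $[a,Jb]=-(a\ast b)+J(a\cdot b)$ are exactly the paper's $x\ast y=[Jx,y]_{\fu}$, $x\cdot y=J\bigl([Jx,y]_{J\fu}\bigr)$, and your identification of the bracket with \eqref{mono} matches the standard structure \eqref{standard}. The only difference is that the paper delegates commutativity, associativity and the compatibility \eqref{comp} to the cited results of \cite{AD} and \cite{AS}, whereas you verify them directly (and correctly) from Lemma \ref{cap}(ii) and the Jacobi identity.
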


\begin{proof}
The proposition follows by considering in $\fu$ the products
\begin{equation}%\label{ast}
x\cdot y= J\left([Jx,y]_{J\fu}\right), \qquad \quad \, x \ast y =
[Jx,y]_{\fu}, \quad x,y \in \fu.\end{equation}
\end{proof}

As a particular case of Proposition \ref{assoc-comm}, we state the
following result, which will be used below.

\begin{corol}\label{coro}
Let  $\fg$ an abelian double product of the form $\fg=\fg'\oplus
J\fg'$. Then:
\begin{enumerate}
\item[(i)] The Lie bracket in $\fg$ induces a structure of commutative associative algebra on $\fg'$ given by
$x \ast y =[Jx,y]$;
\item[(ii)] If $\mathcal A$ denotes the associative algebra $(\fg',\ast)$ in {\rm (i)}, then $\mathcal A^2=\mathcal A$
and $\fg$ is holomorphically isomorphic to $\aff(\mathcal A)$ with
its standard complex structure.
\end{enumerate}
\end{corol}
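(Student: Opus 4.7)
The plan is to apply Proposition~\ref{assoc-comm} to the decomposition $\fg=\fg'\oplus J\fg'$, taking $\fu=\fg'$. This is a legitimate abelian double product decomposition, since $\fg'$ is abelian by Lemma~\ref{cap}(iii) and $J\fg'$ is abelian by Lemma~\ref{cap}(iv). The proposition endows $\fg'$ with the two products
\[ x\cdot y = J\bigl([Jx,y]_{J\fg'}\bigr), \qquad x\ast y=[Jx,y]_{\fg'}, \quad x,y\in\fg'.\]
For part (i), I would exploit that $\fg'$ is an ideal: for any $x,y\in\fg'$ we have $[Jx,y]\in[\fg,\fg']\subseteq\fg'$, so the $J\fg'$-component of $[Jx,y]$ vanishes. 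This forces $x\cdot y=0$, while $[Jx,y]_{\fg'}=[Jx,y]$, yielding the stated formula $x\ast y=[Jx,y]$. Commutativity and associativity of $\ast$ are already guaranteed by Proposition~\ref{assoc-comm}.

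For part (ii), I would read off the Lie bracket on $\fg\cong(\fg',0)\bowtie(\fg',\ast)$ via \eqref{mono}, which now simplifies to
\[ [(a,a'),(b,b')]=\bigl(-(a\ast b'-b\ast a'),\,0\bigr).\]
Hence $\fg'$, computed inside this model, equals $\mathcal A^2\oplus\{0\}$ (using commutativity of $\ast$). Since the hypothesis $\fg=\fg'\oplus J\fg'$ identifies $\fg'$ with the entire first summand $\mathcal A\oplus\{0\}$, I obtain $\mathcal A^2=\mathcal A$.

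To produce the holomorphic isomorphism with $\aff(\mathcal A)=(\mathcal A,\ast)\bowtie(\mathcal A,0)$, I would define $\phi:\fg\to\aff(\mathcal A)$ by $\phi(a,a')=(a',-a)$. The verification has two short steps: holomorphy follows from the identity $\phi(J(a,a'))=\phi(-a',a)=(a,a')=J(a',-a)=J\phi(a,a')$; and the Lie algebra homomorphism property reduces, using commutativity of $\ast$, to comparing
\[ \phi\bigl([(a,a'),(b,b')]\bigr)=(0,\,a\ast b'-b\ast a')\]
with the $\aff(\mathcal A)$-bracket $[(a',-a),(b',-b)]=(0,-a'\ast b+b'\ast a)=(0,\,a\ast b'-b\ast a')$. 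The map $\phi$ is clearly a linear bijection, so this yields the required holomorphic Lie algebra isomorphism.

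The only real obstacle is bookkeeping: one must keep track of which product ($\cdot$ or $\ast$) is trivial in the convention defining $\aff(\mathcal A)$ versus which is trivial in our realization of $\fg$, and choose the sign in $\phi$ accordingly so that both the bracket identity and the relation $\phi J=J\phi$ hold simultaneously. Everything else is a routine consequence of Proposition~\ref{assoc-comm} and the fact that $\fg'$ is an ideal.
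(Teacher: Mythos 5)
Your proof is correct and follows essentially the same route as the paper: part (i) is Proposition \ref{assoc-comm} with $\fu=\fg'$, using that $\fg'$ is an ideal to make the product $\cdot$ vanish, and part (ii) obtains $\mathcal A^2=\mathcal A$ from the commutator ideal (the paper phrases this as $\fg'=[\fg',J\fg']$) and then exhibits an explicit holomorphic isomorphism with $\aff(\mathcal A)$. Your map $(a,a')\mapsto(a',-a)$ is, up to direction and an overall sign, the paper's isomorphism $\phi(x,y)=y-Jx$, and your sign verification checks out.
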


\begin{proof}
(i) follows from Proposition \ref{assoc-comm}, noting that in this
case $x\cdot y=0$ and $x \ast y =[Jx,y]$ for any $x,y\in \fg'$.

(ii) From Lemma \ref{cap}  we have that both, $\fg'$ and $J\fg'$
are  abelian, and therefore,  $\fg'=[\fg',J\fg']$. This implies
that $\mathcal A^2=\mathcal A$. Setting
\[ \phi:\aff(\mathcal A)\to \fg, \quad \phi(x,y)=y-Jx, \]
we obtain that $\phi$ is a holomorphic isomorphism, where
$\aff(\mathcal A)$ is equipped with its standard complex
structure.
\end{proof}

\

We show next that there is a large family of Lie algebras with
abelian complex structure which are not abelian double products.
Let $\fg=\fa \oplus \fv$ where $\fa=\text{span}\{ f_1,f_2\}$  and
$\fv$ is a  $2n$-dimensional real vector space. We fix an
endomorphism $J$   of $\fg$ such that $J^2=-I, \; Jf_1=f_2\, $ and
$\, \fv$ is $J$-stable. Given a linear isomorphism $T$ of $\fv$
commuting with $J|_{\fv}$, we define a Lie bracket on $\fg$ such
that $\fa$ is an abelian subalgebra, $\fv$ is an abelian ideal and
the bracket between elements in $\fa$ and $\fv$ is given by:
\[
[f_1,v]=TJ(v), \hspace{2cm }[f_2,v]=T(v), \qquad \text{ for every
} v\in \fv .
\]
It turns out that $J$ is an abelian complex structure on $\fg$
(see \cite[Example 5.2]{ba-do}).

\smallskip

\begin{prop}
The Lie algebra $\fg$ is not an abelian double product, unless
$n=1$ and $\fg =\aff (\cc )$ with the abelian complex structure
$J_1$ from Example \ref{ab_doub_aff}.
\end{prop}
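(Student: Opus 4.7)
The plan is to use the projection $\pi:\fg\to\fa$ along $\fv$ to constrain any candidate abelian double product decomposition $\fg=\fu\oplus J\fu$, and then identify the only surviving case with $\aff(\cc)$. Assume such a decomposition exists, so $\dim\fu=n+1$. Since both $\fa$ and $\fv$ are $J$-stable, $\pi$ commutes with $J$, and so $k:=\dim\pi(\fu)=\dim\pi(J\fu)\in\{0,1,2\}$. If $k=0$, then $\fu,\,J\fu\subset\fv$ gives $\fg\subset\fv$, which is impossible.

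The core computation is as follows: for any $u_0=a+v_0\in\fu$ with $a=\alpha f_1+\beta f_2$, and any $w\in \fu\cap\fv$, abelianness of $\fu$ together with the fact that $\fv$ is an abelian ideal yields
\[
0=[u_0,w]=[a,w]=\alpha TJw+\beta Tw=T\bigl((\alpha J+\beta I)w\bigr).
\]
Since $T$ is invertible and $J$ has no real eigenvalues, $(\alpha,\beta)\neq(0,0)$ forces $w=0$. In case $k=1$, this gives $\fu\cap\fv=\{0\}$, hence $\dim\fu=1$ and $n=0$, which is outside the setup. In case $k=2$ we have $\pi(\fu)=\fa$, so we may choose $f_1+v_1\in\fu$, and the identity applied with $a=f_1$ and with any $w\in\fu\cap\fv$ forces $\fu\cap\fv=\{0\}$; thus $\dim\fu=2$ and $n=1$.

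For $n=1$ it remains to identify $(\fg,J)$ with $(\aff(\cc),J_1)$. Since $T$ is $\rr$-linear on the $2$-dimensional space $\fv$ and commutes with $J|_{\fv}$, it is $\cc$-linear; identifying $(\fv,J|_{\fv})$ with $\cc$, the map $T$ becomes multiplication by some $\lambda\in\cc^{\times}$. A $J$-equivariant change of basis $f_1',f_2'$ of $\fa$ (found by solving $\lambda(\alpha i+\beta)=1$ for real $\alpha,\beta$) normalizes $T$ to the identity, whereupon the brackets are exactly those of \eqref{corch_aff(C)} and $J$ becomes $J_1$ of Example~\ref{ab_doub_aff}. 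The main obstacle is the case $k=1$: the apparently weak relation $(\alpha J+\beta I)w=0$ with $(\alpha,\beta)\neq(0,0)$ appears to leave room for nonzero $w$, but the absence of real eigenvalues of $J$ on $\fv$ collapses this case, and this is the step that dictates the whole rigidity result.
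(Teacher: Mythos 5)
Your argument is correct, and it runs on the same algebraic engine as the paper's proof --- abelianness of $\fu$, invertibility of $T$, and the invertibility of $\beta I+\alpha J$ on $\fv$ for $(\alpha,\beta)\neq(0,0)$ --- but it is organized differently at both ends. The paper fixes a basis $x_i=a_if_1+b_if_2+e_i$ of $\fu$, derives the relations \eqref{dimu=2} among all pairs, concludes that every $e_k$ lies in $\text{span}\{e_1,Je_1\}$, hence $\fg=\fs=\text{span}\{f_1,f_2,e_1,Je_1\}$ is $4$-dimensional, and then quotes \cite[Lemma 2.8]{ABD1} to identify $(\fg,J)$ with $(\aff(\cc),J_1)$. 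You instead bracket elements of $\fu$ only against $\fu\cap\fv$, showing $\ker(\pi|_{\fu})=\fu\cap\fv=\{0\}$ as soon as $\pi(\fu)\neq\{0\}$, so $n+1=\dim\fu\le\dim\fa=2$ at once (your cases $k=1$ and $k=2$ could even be merged into ``$k\ge 1$''), which is a slightly more economical way to force $n=1$; and you replace the external citation by an explicit identification, using that $T$ acts on $(\fv,J|_{\fv})\cong\cc$ as multiplication by some $\lambda\in\cc^{\times}$ and normalizing it away by the $J$-equivariant change of basis of $\fa$ determined by $\lambda(\alpha i+\beta)=1$, which recovers exactly \eqref{corch_aff(C)} with the structure $J_1$ of Example \ref{ab_doub_aff}. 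This makes the final step self-contained where the paper outsources it. The only point worth stating explicitly is that your case $k=1$ (equivalently $n=0$, $\fv=\{0\}$) is excluded by the standing assumption $n\ge 1$ of the construction, which the paper also uses implicitly; with that remark, the proof is complete.
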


\begin{proof} Assume that $\fg=\fa \oplus \fv$ as above is an abelian double product, $\fg =\fu \oplus J\fu$, and let
$\{ x_1, \dots , x_{n+1}\}$ be a basis of $\fu$. For $i= 1, \dots
, n+1$, there exist $a_i,b_i \in \rr, \; e_i\in \fv$ such that:
\[ x_i=a_i f_1+b_i f_2+e_i. \]
Since $\fu$ is abelian we have
\[ 0=[x_i, x_k]= T\left( (b_i I+a_i J) e_k - (b_k I+a_k J) e_i \right), \qquad \text{ for every } i,k, \]
therefore, as $T$ is an isomorphism, it follows that
\begin{equation} \label{dimu=2}
(b_i I+a_i J) e_k = (b_k I+a_k J) e_i , \qquad \text{ for every }
i,k .
\end{equation}
We note that $a_i^2+b_i^2\neq 0$ for some $i$, otherwise we would
have $\fu \subset \fv$, hence $\fg \subset \fv$, a contradiction.
We may assume that $a_1^2+b_1^2\neq 0$ and, by applying  $(b_1
I+a_1 J)^{-1}$ on both sides of \eqref{dimu=2} with $i=1$, we
obtain:
\[
e_k= (b_1 I+a_1 J)^{-1}(b_k I+a_k J) e_1 , \qquad \text{ for every
} k.
\]
Setting $\fs= \text{span}\{ f_1, f_2, e_1, Je_1\}$, the above
equation implies that $x_k \in \fs $ for every $k$, hence, $\fu
\subset \fs$. Since $\fs$ is $J$-stable, it follows that $\fu
\oplus J\fu \subset \fs$, therefore, $\fg = \fs$. From \cite[Lemma
2.8]{ABD1} we conclude that $\fs \cong \aff (\cc)$ with the
abelian complex structure $J_1$ from Example \ref{ab_doub_aff}.
\end{proof}

\medskip

In the following result we give a characterization of affine Lie
algebras among the Lie algebras carrying an abelian complex
structure. Any affine Lie algebra $\fg=\aff(\mathcal A)$ can be
written as $\fg=\fa\oplus J\fa$ with $\fa$ an abelian ideal and
$J\fa$ an abelian subalgebra and, moreover, $\fg'\cap
J\fg'=\{0\}$. We show next that these conditions are also
sufficient.

\begin{teo} \label{aff}
Let $\fg$ be a solvable Lie algebra with an abelian complex structure $J$
such that $\fg$ admits a vector space decomposition $\fg=\fu +
J\fu$. Then:
\begin{enumerate}
\item[(i)] if $\fu$ is an abelian subalgebra of $\fg$ then $\fg = \fa \oplus J\fa$
 is an abelian double product with $\fa \subset \fu$;
\item[(ii)] if $\fu$ is an abelian ideal of $\fg$ and, moreover, $\fg'\cap J\fg'=\{0\}$, then $(\fg, J)$ is
holomorphically isomorphic to $\aff(\mathcal A)$ for some
commutative associative algebra $(\mathcal A, \cdot)$.
\end{enumerate}
\end{teo}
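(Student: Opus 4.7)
For (i), set $\fv := \fu\cap J\fu$; since $J^2=-\mathrm{id}$, $\fv$ is $J$-stable. The abelian property of $J$ gives $J\fu$ abelian (from $[Jx,Jy]=[x,y]=0$ for $x,y\in\fu$). Pick any complement $\fa_0$ of $\fv$ in $\fu$; the inclusions $\fa_0\cap J\fa_0\subset\fv$ and $\fa_0\cap\fv=0$, combined with a dimension count, yield $\fg=\fa_0\oplus J\fa_0\oplus\fv$. Then pick any decomposition $\fv=\fv_0\oplus J\fv_0$ (possible since $\fv$ is a complex subspace) and set $\fa:=\fa_0+\fv_0\subset\fu$. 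One obtains $\fa+J\fa=\fa_0\oplus J\fa_0\oplus\fv_0\oplus J\fv_0=\fg$, and both $\fa$, $J\fa$ are abelian as subspaces of $\fu$ and $J\fu$.

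For (ii), the plan is to refine the choice of $\fa$ so that it is also an ideal; the conclusion will then follow from a direct construction of an associative product on $\fa$. First I would prove $\fv\subset\fz(\fg)$: for $x\in\fv$ both $x$ and $Jx$ lie in $\fu$, so $[x,\fu]=0$ and Lemma \ref{cap}(ii) gives $[x,J\fu]=-[Jx,\fu]=0$. Since $\fu$ is an ideal, $\fg'=[\fa,J\fa]\subset\fu$, and because $[\fa,\fa]=0$, we have $[\fg,\fa]=[J\fa,\fa]=\fg'$. Thus $\fa$ is an ideal precisely when $\fg'\subset\fa$.

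To secure $\fg'\subset\fa$, decompose $\fg'=\fg'_1\oplus\fg'_2$ with $\fg'_1:=\fg'\cap\fv$. Any complement $\fg'_2$ satisfies $\fg'_2\cap\fv=0$, so I enlarge it to a complement $\fa_0\supset\fg'_2$ of $\fv$ in $\fu$. The hypothesis $\fg'\cap J\fg'=0$ forces $\fg'_1\cap J\fg'_1=0$ inside the $J$-stable space $\fv$, so $\fg'_1$ extends to a Lagrangian half $\fv_0\supset\fg'_1$ with $\fv=\fv_0\oplus J\fv_0$. With $\fa:=\fa_0+\fv_0$, we have $\fg'=\fg'_1+\fg'_2\subset\fa$, and by the argument in (i), $\fg=\fa\oplus J\fa$ with $\fa$ now an abelian ideal.

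Finally, set $\mathcal A:=\fa$ with product $a\cdot b:=[Ja,b]$, which lies in $\fa$ because $\fa$ is an ideal. Commutativity follows from Lemma \ref{cap}(ii) and antisymmetry, since $[Ja,b]=-[a,Jb]=[Jb,a]$; associativity follows from the Jacobi identity in $\fg$ together with $[Jx,Jy]=[x,y]=0$ for $x,y\in\fa$, giving the intermediate identity $x\cdot(y\cdot z)=y\cdot(x\cdot z)$ which combines with commutativity. The map $\phi\colon\aff(\mathcal A)\to\fg$, $\phi(x,y):=Jx-y$, intertwines the standard complex structure on $\aff(\mathcal A)$ with $J$ (since $J(Jx-y)=-x-Jy=\phi(-y,x)$), is a Lie algebra homomorphism by a direct expansion of $[Jx_1-y_1,Jx_2-y_2]$ using the abelianness of $\fa$ and $J\fa$, and is bijective because $\fa\cap J\fa=0$. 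The main obstacle is the refined construction of $\fa$ in the third paragraph: both the centrality of $\fv$ (to place $\fg'_2$ into $\fa_0$) and the hypothesis $\fg'\cap J\fg'=0$ (to embed $\fg'_1$ into a Lagrangian half of $\fv$) are used essentially.
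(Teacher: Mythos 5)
Your proof is correct and follows essentially the same route as the paper's: both arguments build a totally real abelian complement $\fa$ with $\fg=\fa\oplus J\fa$ (arranging $\fg'\subset\fa$ in case (ii), where the hypothesis $\fg'\cap J\fg'=\{0\}$ is used exactly as you use it, to fit the part of $\fg'$ lying in the $J$-stable subspace into a totally real half), and then identify $(\fg,J)$ with $\aff(\mathcal A)$ via the product $x\ast y=[Jx,y]$; the paper only differs in bookkeeping, first replacing $\fu$ by $\fu+\fz$ so that $\fu\cap J\fu=\fz$ and then citing Corollary \ref{coro}, whereas you work with $\fv=\fu\cap J\fu$ directly and verify commutativity, associativity and the holomorphic isomorphism by hand. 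One minor remark: the centrality of $\fv$ (which the paper also proves, in its reduction step) is not actually needed to place $\fg'_2$ inside a complement $\fa_0$ of $\fv$ in $\fu$ --- that follows from linear algebra and $\fg'_2\cap\fv=\{0\}$ alone --- so your closing sentence slightly overstates its role, but the argument is unaffected.
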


\begin{proof}
We may assume, in either case, that $\fu\cap J\fu=\fz$, where
$\fz$ denotes the center of $\fg$. Indeed, $\fu+\fz$ is an abelian
subalgebra (respectively, abelian ideal) such that $\fg=(\fu+\fz)
+ J(\fu+\fz)$ and $(\fu+\fz) \cap J(\fu+\fz)=\fz$. To prove the
last equality, let $x+z = J(y+w)$ where $x,y\in \fu $ and $z,w\in
\fz$. Then $[x+z, u]=0$ for all $u \in  \fu$ and
\[ [x+z, Ju]= - [J(x+z), u]= [y +w, u]= 0, \quad u \in  \fu.
\] Since $\fg=\fu + J\fu$ and $\fz$ is $J$-invariant, the
assertion follows.

(i) Let $\fu=\fh+\fz$ and $\fz=\fl \oplus J\fl$. Setting $\fa =
\fh \oplus \fl$, one obtains $\fg=\fa \oplus J\fa$, with $\fa
\subset \fu$, hence $\fa$ is an abelian subalgebra, and (i)
follows.

\smallskip

(ii) Since in this case $\fu$ is an abelian ideal, one has that
$\fg'\subset \fu$. Let $\fh \subset \fg'$ and $\fk \subset \fu$ be
subspaces  such that \[ \fg'=\fh \oplus (\fg'\cap \fz), \qquad \fu
=\fk \oplus (\fg'+ \fz). \]

Using the condition $\fg'\cap J\fg'=\{0\}$, we may decompose $\fz$
as
\[\fz = (\fg'\cap \fz) \oplus\fl \oplus J(\fg'\cap \fz) \oplus
J\fl.\]

Set now $\fv=\fk \oplus \fh \oplus (\fg'\cap \fz) \oplus\fl\subset
\fu$. It can be verified that $\fv\cap J \fv=\{0\}$ and $\dim
\fv=\frac12 \dim \fg$, hence $\fg=\fv\oplus J \fv$. Since $\fg'
\subset \fv \subset \fu $, it follows that $\fv$ is an abelian
ideal. Arguing as in the proof of Corollary \ref{coro}, we obtain
that $\fg=\fv\oplus J \fv$ is holomorphically isomorphic to
$\aff(\mathcal A)$ where $\mathcal A$ is the commutative
associative algebra $(\fv,*)$ where $x* y=[Jx,y], \, x,y\in \fv$.
\end{proof}

\begin{corol}
Let $\fg$ be a solvable Lie algebra with an abelian complex
structure $J$. Then:
\begin{enumerate}
\item $\fg'_J$ is an abelian double product and if $\fg'\cap
J\fg'=\{0\}$, then $(\fg'_J, J)$ is holomorphically isomorphic to
$\aff(\mathcal A)$ for some commutative associative algebra
$(\mathcal A, \cdot)$;
\item if $\fg=\fg' + J\fg'$, then $\fg= \fu \oplus J\fu$ is an abelian
double product for some subalgebra $\fu \subset \fg '$.
\end{enumerate}
\end{corol}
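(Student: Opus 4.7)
The plan is to deduce this corollary as a direct application of Theorem \ref{aff} to the $J$-stable subalgebra $\fg'_J=\fg'+J\fg'$. The very first thing I need to check is that $\fg'_J$ is in fact a Lie subalgebra to which the theorem applies. This is immediate from Lemma \ref{cap}: parts (iii) and (iv) say that both $\fg'$ and $J\fg'$ are abelian, while the fact that $\fg'$ is an ideal of $\fg$ gives $[\fg',J\fg']\subset \fg'$; combining these yields $[\fg'_J,\fg'_J]=[\fg',J\fg']\subset \fg'$. As a subalgebra of the solvable Lie algebra $\fg$, the subalgebra $\fg'_J$ is solvable, and $J$ restricts to an abelian complex structure on it.

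For part (1), I apply Theorem \ref{aff}(i) to $\fg'_J$ with the abelian subalgebra $\fu:=\fg'$: by definition $\fg'_J=\fg'+J\fg'$, so the theorem produces a decomposition $\fg'_J=\fa\oplus J\fa$ with $\fa\subset\fg'$, exhibiting $\fg'_J$ as an abelian double product. Under the additional hypothesis $\fg'\cap J\fg'=\{0\}$, I would next verify the hypotheses of Theorem \ref{aff}(ii): $\fg'$ is an abelian ideal of $\fg$ and hence of $\fg'_J$, and the commutator computation $(\fg'_J)'=[\fg'+J\fg',\,\fg'+J\fg']=[\fg',J\fg']\subset \fg'$ gives
\[
(\fg'_J)'\cap J(\fg'_J)'\subset \fg'\cap J\fg'=\{0\}.
\]
Theorem \ref{aff}(ii) then yields the desired holomorphic isomorphism $\fg'_J\cong \aff(\mathcal A)$ for the commutative associative algebra $\mathcal A$ supplied by that theorem.

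For part (2), the assumption $\fg=\fg'+J\fg'$ is literally the statement that $\fg=\fg'_J$, so the first half of (1) gives $\fg=\fa\oplus J\fa$ with $\fa\subset \fg'$ an abelian subalgebra, and we take $\fu:=\fa$.

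The corollary is thus a near-tautological consequence of Theorem \ref{aff}; the only substantive work is confirming that the hypotheses of that theorem transfer correctly from $\fg$ to its canonical $J$-stable subalgebra $\fg'_J$. The one tiny place where care is needed, and which I would expect to be the only mildly technical step, is the identification $(\fg'_J)'=[\fg',J\fg']\subset \fg'$, which is what converts the assumption $\fg'\cap J\fg'=\{0\}$ into the hypothesis needed to invoke Theorem \ref{aff}(ii).
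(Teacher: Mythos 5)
Your argument is correct and is exactly the route the paper intends: the corollary is stated without proof as an immediate consequence of Theorem \ref{aff} applied to the $J$-stable (sub)algebra $\fg'_J$ with $\fu=\fg'$, and your verification that $(\fg'_J)'=[\fg',J\fg']\subset\fg'$ (so that $\fg'\cap J\fg'=\{0\}$ yields the hypothesis of Theorem \ref{aff}(ii)) is precisely the small check needed.
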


\

\section{Hermitian Lie algebras with abelian complex structures}

In this section we study natural questions concerning
Hermitian Lie algebras with an abelian complex structure, namely,
the existence of K\"ahler structures and the
flatness of the first canonical Hermitian connection.

\subsection{K\"ahler Lie algebras with abelian complex structures.}
We show next that such a Lie algebra is affine of a very
restrictive type. More precisely, its associated simply connected
Lie group is a direct product of $2$-dimensional hyperbolic spaces
and a flat factor.

\begin{teo}\label{kahler-abelian}
Let $(\mathfrak g, J, g)$ be a K\"ahler Lie algebra with $J$ an
abelian complex structure. Then $\fg$ is isomorphic to
\[ \aff(\rr)\times \cdots \times \aff(\rr)\times \rr^{2s},\] and this decomposition is orthogonal and $J$-stable.
\end{teo}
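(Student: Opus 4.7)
The plan is to exhibit $\fg$ as a $g$-orthogonal, $J$-stable direct sum of ideals $\fg=\fg'_J\oplus\fw$, with $\fg'_J\cong\aff(\rr)^k$ and $\fw\cong\rr^{2s}$ abelian. I would start by extracting the key algebraic identity from the K\"ahler condition. Applying $d\omega(x,Ja,Jb)=0$ with $a,b\in\fg'$ and $x\in\fg$, and using $[Ja,Jb]=[a,b]=0$ (Lemma~\ref{cap}(iii)) together with $[u,Jv]=-[Ju,v]$ (Lemma~\ref{cap}(ii)), the Hermitian property $g(J\cdot,J\cdot)=g(\cdot,\cdot)$ reduces the identity to
\[
g([Jx,a],b)=g(a,[Jx,b]),\qquad a,b\in\fg',\ x\in\fg.
\]
Thus each $\ad_{Jx}|_{\fg'}$ is $g|_{\fg'}$-self-adjoint. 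Moreover $[\ad_{Jx},\ad_{Jy}]=\ad_{[x,y]}$ vanishes on the abelian ideal $\fg'$, so $\{\ad_{Jx}|_{\fg'}\}_{x\in\fg}$ is a commutative family of self-adjoint operators. Simultaneous diagonalization yields a $g$-orthonormal basis $\{e_1,\dots,e_n\}$ of $\fg'$ with $[Jx,e_i]=\mu_i(x)\,e_i$ for linear functionals $\mu_i\in\fg^{*}$.

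Next I would show that $\fz\cap\fg'=\{0\}$. For $\zeta\in\fz\cap\fg'$, the identity $d\omega(\zeta,y,z)=0$ reduces (since $\ad_\zeta=0$) to $g(J[y,z],\zeta)=0$, so $\zeta\perp J\fg'$; by Lemma~\ref{cap}(i), $J\zeta\in\fz$, so the same identity applied to $J\zeta$ gives $\zeta\perp\fg'$, forcing $\zeta=0$. A vanishing $\mu_i$ would place $e_i$ in $\fz\cap\fg'$, so each $\mu_i\neq 0$. The commutativity $[Jx,y]=[Jy,x]$ (a direct consequence of Lemma~\ref{cap}(ii) and antisymmetry), evaluated on basis vectors, then yields $\mu_i(e_j)=\delta_{ij}\mu_i$ with $\mu_i:=\mu_i(e_i)\neq 0$. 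Unwinding $d\omega(e_i,Je_i,e_j)=0$ for $i\neq j$ gives $\mu_i\,\omega(e_i,e_j)=0$, hence $\omega(e_i,e_j)=0$, i.e.\ $\fg'\perp J\fg'$.

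I would then set $\fw:=(\fg'_J)^{\perp}$, which is $J$-stable since $g$ is Hermitian. For $w\in\fw$ one has $g(Je_i,w)=0$ automatically, and $d\omega(w,e_i,Je_i)=0$ unwinds via the eigenvalue formulas to $\mu_i(Jw)+\mu_i\,g(Je_i,w)=0$, hence $\mu_i(Jw)=0$; the $J$-stability of $\fw$ then gives $\mu_i|_{\fw}=0$, i.e.\ $[\fw,\fg']=0$, and by Lemma~\ref{cap}(ii) also $[\fw,J\fg']=0$, so $[\fw,\fg'_J]=0$. Applying $d\omega(x,y,Jz)=0$ for $x,y\in\fw$, $z\in\fg'$ reduces to $g([x,y],z)=0$, which together with $[x,y]\in\fg'$ forces $\fw$ to be abelian. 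Therefore $\fg=\fg'_J\oplus\fw$ is an orthogonal, $J$-stable direct sum of Lie algebras with $\fw\cong\rr^{2s}$. On $\fg'_J$ the basis $\{e_i,Je_i\}_{i=1}^{k}$ is $g$-orthonormal, the only nontrivial bracket is $[Je_i,e_i]=\mu_i e_i$, and the two-planes $\text{span}\{e_i,Je_i\}$ commute pairwise; each such plane is isomorphic to $\aff(\rr)$, giving $\fg\cong\aff(\rr)^k\times\rr^{2s}$ with the asserted orthogonal $J$-stable decomposition.

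The hard part will be the elimination of a nontrivial center inside $\fg'$ carried out in the second paragraph, since this is what forces each $\mu_i$ to be nonzero and thus identifies the two-dimensional pieces as $\aff(\rr)$ rather than as additional $\rr^2$ summands of the flat part. Once this centralizer-vanishing is secured, the remainder is bookkeeping with the eigenvalue structure and the orthogonal projection onto $\fg'_J$.
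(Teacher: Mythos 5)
Your strategy is sound in outline and genuinely different from the paper's: where the paper passes through Corollary \ref{coro} and the structure theory of the commutative associative algebra $(\fg',\ast)$ (no nilpotents, hence semisimple, hence a sum of copies of $\rr$), you simultaneously diagonalize the commuting family of $g$-symmetric operators $\ad_y|_{\fg'}$ and read off the structure constants, and your individual $d\omega$ computations are correct. However, there is a genuine gap at the pivotal claim $\mu_i:=\mu_i(e_i)\neq 0$. What your argument $\fz\cap\fg'=\{0\}$ gives is only that the functional $\mu_i$ is not identically zero on $\fg$; combined with $\mu_i(e_j)=\delta_{ij}\mu_i(e_i)$ this still leaves open the possibility that $\mu_i(e_i)=0$ while $\mu_i$ is supported on $(\fg'_J)^{\perp}$, i.e.\ that $[Je_i,e_i]=0$ but $\ad_{Jw}e_i\neq 0$ for some $w$ orthogonal to $\fg'_J$. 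Ruling out precisely this is where the paper's proof does its real work (showing $\ad_x=0$ for every $x\in(\fg'_J)^{\perp}$, so that $(\fg'_J)^{\perp}=\fz$), and your closing remark that the hard part is already secured by the centralizer step $\fz\cap\fg'=\{0\}$ is mistaken. Since $\mu_i(e_i)\neq 0$ is exactly what you use to get $\fg'\perp J\fg'$ and to identify each plane $\text{span}\{e_i,Je_i\}$ as $\aff(\rr)$ rather than a flat $\rr^2$, the proof as written does not go through.

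The gap is, however, closed by your own later computations once they are reordered. Note first that $\mu_i$ vanishes on $J\fg'$ automatically (because $\fg'$ is abelian, $[J(Je_j),e_i]=-[e_j,e_i]=0$), and your computation of $d\omega(w,e_i,Je_i)=0$ for $w\in\fw=(\fg'_J)^{\perp}$, which nowhere uses $\mu_i(e_i)\neq 0$ nor $\fg'\perp J\fg'$ (the terms $g(Je_i,w)$ and $g(Je_i,e_i)$ vanish for free), shows $\mu_i|_{\fw}=0$. Hence $\mu_i$ is determined by its values $\mu_i(e_j)=\delta_{ij}\mu_i(e_i)$ on the spanning set $\fg'\cup J\fg'\cup\fw$ of $\fg$, so $\mu_i(e_i)=0$ would force $\mu_i\equiv 0$ and $e_i\in\fz\cap\fg'$, a contradiction. (Equivalently: after you establish $[\fw,\fg'_J]=0$ and $\fw$ abelian, $\fg'$ is spanned by the brackets $[Je_i,e_j]=\delta_{ij}\mu_i(e_i)e_i$, which already forces every $\mu_i(e_i)\neq 0$.) With the order: first $\mu_i|_{J\fg'+\fw}=0$, $[\fw,\fg'_J]=0$ and $\fw$ abelian, then $\mu_i(e_i)\neq 0$, then $\fg'\perp J\fg'$ and the final assembly, your argument becomes a correct and somewhat more elementary alternative to the paper's associative-algebra route.
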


\begin{proof}
Combining the K\"ahler condition $d\omega=0$ with \eqref{d-omega}
we obtain
\[
0=d\omega(Jx,Jy,Jz)=-\omega([Jx,Jy],Jz)-\omega([Jy,Jz],Jx)-\omega([Jz,Jx],Jy).
\]
The fact that $J$ is abelian gives
\begin{equation}\label{kahler}
g([x,y],z)+g([y,z],x)+g([z,x],y)=0 \end{equation} for any
$x,y,z,\in\fg$, and from this expression the following statements
are easily verified:
\begin{enumerate}
\item[(i)] $\fz\subseteq (\fg'_J)^{\perp}$.
\item[(ii)] $(\fg')^{\perp}$ is abelian.
\item[(iii)] $\ad_z|_{\fg'}$ is symmetric for all $z\in\fg$.
\end{enumerate}

We prove next that we have an equality in (i). Let
$x\in(\fg'_J)^{\perp}$, then $J\ad_x=-\ad_xJ$. Indeed, we compute
\begin{align*}
g(J[x,y],z) & = -g([x,y],Jz)\\
            & = g([y,Jz],x)+g([Jz,x],y) \\
            & = g([Jz,x],y)=-g([z,Jx],y)\\
            & = g([Jx,y],z)+g([y,z],Jx)\\
            & = g([Jx,y],z) \\
            & = -g([x,Jy],z)
\end{align*}
and the claim follows. Since $(\fg'_J)^{\perp}\subset
(\fg')^{\perp}$, it follows from (ii) that $(\fg'_J)^{\perp}$ is
abelian. Furthermore, it is   $J$-stable and therefore  we have
\[
(\ad_x)^2J=-\ad_x\ad_{Jx}=-\ad_{Jx}\ad_{x}=\ad_x J \ad_x =
-(\ad_x)^2 J
\]
which implies $(\ad_x)^2=0$. Since $\ad_x$ is also symmetric
(which follows easily from \eqref{kahler}), we obtain that
$\ad_x=0$, that is, $x\in\fz$ and therefore
\begin{equation}\label{decom}
\fz = (\fg'_J)^{\perp} \text{\qquad and \qquad }
\fg=(\fg'_J)\oplus \fz.
\end{equation}
It follows from Lemma \ref{cap}(v) that $\fg'\cap J\fg'=\{0\}$.
Hence,
\begin{equation}\label{direct1}
\fg=(\fg'\oplus J\fg')\oplus \fz.
\end{equation}

Now, from Corollary \ref{coro} applied to $\fg'\oplus J\fg'$, we
obtain that there is a structure of commutative associative
algebra on $\fg'$ given by
\[ x\ast y= [Jx,y], \qquad  \text{for } x,y \in \fg'.\]
We denote $\mathcal A=(\fg',\ast)$ and $\ell_x:\fg'\to\fg'$ the
multiplication by $x\in\fg'$. Using that $J$ is abelian and
condition (iii) above, we obtain that $\ell_x$ is symmetric with
respect to $g|_{\mathcal A\times \mathcal A}$ for any $x\in
\mathcal A$. If $x_0\in \mathcal A$ is a nilpotent element, then
$\ell_{x_0}$ is nilpotent. Since $\ell_{x_0}$ is also symmetric,
we have that $\ell_{x_0}=0$, that is, $x_0\in\fz$. It follows from
\eqref{direct1} that $x_0=0$. Thus, $\mathcal A$ has no nilpotent
elements, and therefore it is semisimple. From structure theory of
associative commutative algebras, we have that $\mathcal A$ is a
direct sum of copies of $\rr$ and $\cc$ with their canonical
product. Moreover, $\cc$ cannot occur in the decomposition since
$\ell_x$ has real eigenvalues for any $x\in\fg'$. We conclude that
\[ \mathcal A=\rr e_1 \oplus \cdots \oplus \rr e_n, \qquad \text{with} \qquad e_i^2=e_i,\, e_ie_j=0, \, i\neq j.\]
This gives a $J$-stable splitting
\[ \fg'\oplus J\fg'=\aff(\rr e_1)\times \cdots \times  \aff(\rr e_n).\]
Moreover, using that $e_i^2=e_i,\, e_ie_j=0, \, i\neq j$ for
$i,j=1,\ldots, n$, we obtain that the splitting above is
orthogonal.
\end{proof}

\smallskip

\begin{corol}\label{kahler-group}
Let $G$ be a simply connected Lie group equipped with a
left invariant K\"ahler structure $(J,g)$ such that $J$ is
abelian. If the commutator subgroup is $n$-dimensional  and the
center is $2s$-dimensional, then
\[ G=H^2(-c_1)\times \cdots \times H^2(-c_n) \times \RR^{2s}, \]
where $c_i>0,\, i=1,\ldots,n$, and $H^2(-c_i)$ denotes the
$2$-dimensional hyperbolic space of constant curvature $-c_i$.
\end{corol}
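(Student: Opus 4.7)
The proof is an integration of Theorem \ref{kahler-abelian} from the Lie algebra level to the Lie group level, followed by a geometric identification of each factor.

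First, I would apply Theorem \ref{kahler-abelian} to the Hermitian Lie algebra $(\fg, J, g)$ of $G$. This produces an orthogonal, $J$-stable splitting
\[ \fg \;\cong\; \aff(\rr) \oplus \cdots \oplus \aff(\rr) \oplus \rr^{2s'} \]
with some number $k$ of $\aff(\rr)$ summands. Since each $\aff(\rr)$ summand has a one-dimensional commutator ideal and trivial center, while $\rr^{2s'}$ is entirely central, matching dimensions against the hypotheses yields $k = \dim \fg' = n$ and $2s' = \dim \fz = 2s$.

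Second, I would pass to the group level. Because $G$ is simply connected and the decomposition above is a direct sum of Lie ideals, $G$ is isomorphic to the direct product of the simply connected Lie groups associated with each summand. Orthogonality of the splitting at the identity, left-translated, implies that $g$ is the Riemannian product of the restricted left invariant metrics on the individual factors. The abelian summand contributes a flat $\rr^{2s}$.

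Third, I would identify each non-abelian factor. Let $G_i$ be the simply connected Lie group with Lie algebra $\aff(\rr)$; it is diffeomorphic to $\rr^2$, and equipped with the restricted metric $g_i$ it is a homogeneous Riemannian $2$-manifold, hence of constant Gaussian curvature $\kappa_i$. Non-abelianness of $\aff(\rr)$ prevents $\kappa_i = 0$, and non-compactness of $G_i$ rules out $\kappa_i > 0$, so $\kappa_i = -c_i$ with $c_i > 0$; the simply connected complete $2$-manifold of constant curvature $-c_i$ is $H^2(-c_i)$.

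The only step that demands real verification is $\kappa_i \neq 0$. This I would settle by a short direct computation via \eqref{LC}: pick $\{e_1,e_2\}$ in $\aff(\rr)$ with $[e_1,e_2]=e_2$, orthonormalize against $g_i$, compute $\nabla^{g_i}$, and read off $g_i(R(e_1,e_2)e_2,e_1)$ as a strictly negative constant depending on $g_i$. This simultaneously excludes flatness and pins down the curvature sign without appeal to any classification.
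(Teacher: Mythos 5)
Your proposal is correct and follows essentially the same route as the paper: apply Theorem \ref{kahler-abelian} to get the orthogonal $J$-stable splitting into $\aff(\rr)$ factors and a central $\rr^{2s}$, pass to the simply connected group, and identify each $\aff(\rr)$ factor carrying a left invariant metric with a hyperbolic plane (the curvature computation you sketch is exactly what underlies the paper's one-line identification, and it correctly replaces your provisional ``non-abelian $\Rightarrow$ non-flat'' remark, which is not a valid general principle). The only difference is that the paper pins down the constants explicitly, $c_i=1/\Vert e_i\Vert^2$ with $e_i$ the idempotents from the proof of Theorem \ref{kahler-abelian}, whereas you only conclude $c_i>0$, which is all the statement requires.
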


\begin{proof}
Let $e_i\in \fg'$ be given as in the proof of Theorem
\ref{kahler-abelian}, and let $r_i=\Vert e_i \Vert = \Vert Je_i \Vert$. Then the
simply connected Lie group with Lie algebra $\aff(\rr e_i)$ is
isometric and bilohomorphic to $H^2(-c_i)$, where
$c_i=\frac{1}{r_i^2}$.
\end{proof}

\smallskip

If $G$ is a simply connected Lie group and $\Gamma\subset G$ is a
discrete subgroup then any left invariant Hermitian structure on
$G$ gives rise to a unique Hermitian structure on $\Gamma
\backslash G$ such that the projection $G\to \Gamma \backslash G$
is a holomorphic local isometry. This structure on $\Gamma
\backslash G$ will also be called left invariant.

\begin{corol} Let $M=\Gamma \backslash G$ be a compact quotient with a left invariant K\"ahler structure $(J,g)$ such that $J$ is abelian. Then $M$ is diffeomorphic to a torus.
\end{corol}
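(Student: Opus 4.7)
The plan is to combine Corollary \ref{kahler-group} with the classical fact that a connected Lie group admitting a discrete cocompact subgroup must be unimodular. Since $M=\Gamma\backslash G$ is compact by hypothesis, $\Gamma$ is cocompact in the simply connected Lie group $G$, and this will force the hyperbolic factors to disappear.

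First I would apply Corollary \ref{kahler-group} to write
\[ G=H^2(-c_1)\times \cdots \times H^2(-c_n)\times \RR^{2s}, \]
where each $H^2(-c_i)$ is the simply connected solvable Lie group with Lie algebra $\aff(\rr)$. A one-line check shows that $\aff(\rr)$ is not unimodular: choosing a basis $\{f_1,f_2\}$ with $[f_1,f_2]=f_2$, one has $\mathrm{tr}(\ad_{f_1})=1\neq 0$. Since the modular function of a direct product is the product of the modular functions of the factors, $G$ is unimodular if and only if each $H^2(-c_i)$ is, that is, if and only if $n=0$.

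Next I would invoke the classical result (see for instance Milnor, \emph{Curvatures of left invariant metrics on Lie groups}, 1976) that a connected Lie group admitting a discrete cocompact subgroup is necessarily unimodular. Compactness of $M$ therefore forces $n=0$, so $G=\RR^{2s}$. Any discrete subgroup $\Gamma$ of $\RR^{2s}$ with compact quotient is a full-rank lattice, and hence $M=\RR^{2s}/\Gamma$ is diffeomorphic to a $2s$-torus.

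The only real obstacle is the clean invocation of the external unimodularity theorem; alternatively one could argue directly that the non-nilpotent solvable group $\aff(\rr)$ admits no cocompact lattice by inspecting its adjoint action and comparing left- and right-invariant Haar measures, but the unimodularity route is the cleanest and makes the reduction to $G=\RR^{2s}$ essentially immediate from Corollary \ref{kahler-group}.
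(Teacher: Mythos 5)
Your proof is correct and follows essentially the same route as the paper: use the fact that a Lie group with a discrete cocompact subgroup is unimodular, observe that $\aff(\rr)$ is not unimodular, and conclude from the structure result of Theorem \ref{kahler-abelian} (equivalently Corollary \ref{kahler-group}) that $G=\RR^{2s}$, so $M$ is a torus.
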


\begin{proof}
As $G$ admits a compact quotient, then $G$ is a unimodular Lie
group. Using the characterization given in Theorem
\ref{kahler-abelian}, together with the fact that $\aff(\RR)$ is
not unimodular, the corollary follows.
\end{proof}

\smallskip

\begin{rem}
{\rm We point out that the previous corollary holds without the assumption of the left invariance of the metric $g$. Indeed, following \cite{Be} (see also \cite{u}) any K\"ahler metric with respect to a left invariant complex structure gives rise to a left invariant K\"ahler metric with respect to the same complex structure.}
\end{rem}

\begin{rem}
{\rm In \cite{BB} the pseudo-Riemannian geometry of abelian para-K\"ahler Lie algebras is studied. In particular, the authors provide conditions to ensure the flatness or Ricci-flatness of the para-K\"ahler (neutral) metric.}
\end{rem}

\medskip

\subsection{The first canonical Hermitian connection}

Given a Hermitian Lie algebra $(\fg,J,g)$, consider the connection
$\nabla^1$ on $\fg$ defined by
\[
g\left( \nabla^1_x y , z \right) =  g\left( \nabla^g _x y,z
\right) + \frac 14 \left( d\omega (x,Jy,z) + d\omega (x,y,Jz)
\right) ,
\]
where $\omega$ is the K\"ahler form. This connection satisfies
\[
\nabla^1 g =0, \quad \nabla^1 J= 0, \quad T^1  \; \text{ is of type } (1,1).
\]

The connection $\nabla^1$ is known as the first canonical
Hermitian connection associated to the Hermitian Lie algebra
$(\fg,J,g)$ (see \cite{Li,Ga}). It is proved in \cite[p. 21]{A}
that another expression for $\nabla^1$ in terms of the Levi-Civita
connection $\nabla^g$ is given by
\begin{equation}\label{1-nabla}
\nabla^1_{x}y := {\nabla^g}_x y  + \frac{1}{2} \left(
{\nabla^g}_{x}J\right)Jy = \frac{1}{2}({\nabla^g}_x y -J
{\nabla^g}_x Jy ),
\end{equation}
for $x,y \in \fg$. More generally, if ${\nabla}$ is anyconnection on $\mathfrak g$, define
\begin{equation}\label{nabla1}
\overline{\nabla}_{x}y := {\nabla}_x y  + \frac{1}{2} \left(
{\nabla}_{x}J\right)Jy = \frac{1}{2}({\nabla}_x y -J {\nabla}_x Jy
),
\end{equation}
for $x,y \in \mathfrak g$. It is easy to see that
$\overline{\nabla} J=0$ and, furthermore, if $\nabla$ is
torsion-free, then $\overline{T}(x,y)=\overline{T}(Jx,Jy)$, i.e.
$\overline{T}$ is of type $(1,1)$ with respect to $J$.

\medskip

We prove in the next result that when $\nabla$ is torsion-free and
$\overline{\nabla}$ coincides with the $(-)$-connection, then the
complex structure $J$ is abelian, and therefore, the Lie algebra
is $2$-step solvable.

\begin{lem}\label{first=0}
Let $\nabla$ be a torsion-free connection and $J$ a complex
structure on $\mathfrak g$. Assume that $\overline{\nabla}=0$,
that is, $\nabla_x J =-J \nabla _x$ for every $x\in \mathfrak g$.
Then $J$ is abelian.
\end{lem}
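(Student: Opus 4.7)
The plan is to feed the two structural identities --- torsion-freeness $[u,v] = \nabla_u v - \nabla_v u$ and the hypothesis $\nabla_u (Jv) = -J\nabla_u v$ --- into the Nijenhuis tensor
\[ N(x,y) = [Jx,Jy] - J[Jx,y] - J[x,Jy] - [x,y], \]
and observe that after simplification it collapses to a scalar multiple of $[Jx,Jy] - [x,y]$. Since $J$ is assumed to be a complex structure, $N \equiv 0$, and the abelian condition $[Jx,Jy] = [x,y]$ drops out.

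First I would expand each of the four brackets appearing in $N$ by writing it as $\nabla_u v - \nabla_v u$ and then commuting $J$ past $\nabla$ wherever it appears inside. This produces
\[ [Jx,Jy] = -J\nabla_{Jx}y + J\nabla_{Jy}x, \qquad J[Jx,y] = J\nabla_{Jx}y - \nabla_y x, \]
\[ J[x,Jy] = \nabla_x y - J\nabla_{Jy}x, \qquad [x,y] = \nabla_x y - \nabla_y x. \]
Next I would substitute into the definition of $N$; the $J\nabla_{Jx}y$ and $J\nabla_{Jy}x$ terms each appear twice with matching signs, and the $\nabla_x y$ and $\nabla_y x$ terms group in a controlled way, so the sum reduces to
\[ N(x,y) \;=\; -2J\bigl(\nabla_{Jx}y - \nabla_{Jy}x\bigr) \;-\; 2\bigl(\nabla_x y - \nabla_y x\bigr). \]

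Finally I would identify the two parenthesised expressions. The second is just $[x,y]$ by torsion-freeness. For the first, torsion-freeness applied to $[Jx,Jy]$ together with $\nabla J = -J\nabla$ gives $[Jx,Jy] = -J\bigl(\nabla_{Jx}y - \nabla_{Jy}x\bigr)$, whence (using $J^2=-I$) $\nabla_{Jx}y - \nabla_{Jy}x = J[Jx,Jy]$. Multiplying by $-2J$ and using $J^2 = -I$ once more converts this contribution into $2[Jx,Jy]$, so that
\[ N(x,y) \;=\; 2\bigl([Jx,Jy] - [x,y]\bigr). \]
Integrability of $J$ then forces $[Jx,Jy] = [x,y]$, which is exactly the abelian condition.

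I do not foresee a genuine obstacle here: the statement is a purely algebraic consequence of the two commutation relations combined with the vanishing of $N$. The one point that requires a bit of care is sign-bookkeeping when moving $J$ past $\nabla$ repeatedly and when invoking $J^2 = -I$, but this is a mechanical check rather than a conceptual hurdle.
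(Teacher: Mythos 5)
Your proof is correct and is essentially the paper's own argument, just unpacked: the paper observes that $\overline{\nabla}=0$ gives $\overline{T}(x,y)=-[x,y]$ and then invokes the previously stated fact that $\overline{T}$ is of type $(1,1)$ when $\nabla$ is torsion-free, and that fact is precisely the identity $\overline{T}(x,y)-\overline{T}(Jx,Jy)=\tfrac12 N(x,y)$ which your computation reproduces in the special case $\overline{\nabla}=0$, namely $N(x,y)=2\bigl([Jx,Jy]-[x,y]\bigr)$. In both versions the conclusion rests on the same three ingredients (torsion-freeness, $\nabla_x J=-J\nabla_x$, and the vanishing of $N$ because $J$ is integrable), so there is no substantive difference in route.
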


\begin{proof}
If $\overline{\nabla}=0$, then $\overline{T}(x,y)=-[x,y]$, where
$\overline{T}$ denotes the torsion of $\overline{\nabla}$. Since
$\overline{T}$ is of type $(1,1)$, it follows that $J$ is abelian.
\end{proof}

\medskip

If in Lemma \ref{first=0} the connection $\nabla$ is the
Levi-Civita connection of a Hermitian metric on $(\fg, J)$, then a
much stronger restriction occurs, namely, $\fg$ is abelian. This
is the content of the next theorem.

\begin{teo}\label{nabla-first} Let $(\fg, J, g)$ be
a Hermitian Lie algebra such that its associated first canonical
connection $\nabla^1$ satisfies $\nabla^1_xy=0$ for every $x,y\in
\fg$, that is, $\nabla^1$ coincides with the $(-)$-connection.
Then $\fg$ is abelian.
\end{teo}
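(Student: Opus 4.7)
Using formula~\eqref{1-nabla}, $\nabla^1\equiv 0$ is equivalent to $\nabla^g_x\circ J+J\circ\nabla^g_x=0$ for every $x\in\fg$. Writing $A_x:=\nabla^g_x$, each $A_x$ anticommutes with $J$; since $\nabla^g$ is torsion-free, Lemma~\ref{first=0} yields that $J$ is abelian. By Lemma~\ref{cap} the ideal $\fg'$ is abelian, $\fg$ is $2$-step solvable, and $[Jx,y]=-[x,Jy]$ holds.

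\textbf{Complexification and derivation of a key identity.}
Let $\xi_1,\ldots,\xi_n$ be a unitary basis of the (abelian) subalgebra $\fg^{1,0}\subset\fg_\cc$, and extend $A$ $\cc$-linearly in its first variable. Since each $A_\xi$ anticommutes with $J$, it interchanges $\fg^{1,0}$ and $\fg^{0,1}$, so one may write $A_{\xi_i}\xi_j=\sum_k\beta_{kij}\bar{\xi}_k$. Two constraints on $\beta$ emerge: the $g$-skew-symmetry of $A_x$ (pulled through $g_\cc(\bar{\xi}_k,\xi_\ell)=2\delta_{k\ell}$) forces antisymmetry in $(k,j)$, while the abelianness $[\xi_i,\xi_j]=0$ combined with torsion-freeness $A_{\xi_i}\xi_j=A_{\xi_j}\xi_i$ forces symmetry in $(i,j)$. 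Chasing these incompatible symmetries around the three indices gives $\beta_{kij}=-\beta_{kij}$, so $\beta\equiv0$. Separating real and imaginary parts of $A_{\xi_x}\xi_y=0$ translates to the real identity $A_{Jx}=JA_x$ for every $x\in\fg$. Combining this with the anticommutation and torsion-freeness $A_xy-A_yx=[x,y]$ produces the explicit formula $A_xy=\tfrac12\bigl([x,y]+J[x,Jy]\bigr)$, and equating it with the Koszul identity~\eqref{LC} yields the trilinear identity
\[
g([x,Jy],Jz)=g([y,z],x)-g([z,x],y),\qquad x,y,z\in\fg.
\]

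\textbf{Conclusion and main obstacle.}
It remains to deduce $[\cdot,\cdot]\equiv0$ from this identity. My plan is to combine it with its cyclic permutations (which automatically force $\sum_{\mathrm{cyc}}g([x,Jy],Jz)=0$), the $J$-abelian relation $[x,Jy]=-[Jx,y]$, and the Jacobi identity on $\fg$ (very restrictive since $\fg'$ is abelian) to force the trilinear form $C(x,y,z):=g([x,y],z)$ to vanish; then $[x,y]=0$ by nondegeneracy of $g$, and $\fg$ is abelian. The main obstacle is precisely this final combinatorial step. The derived identity is strictly stronger than the K\"ahler identity~\eqref{kahler} (one verifies on $\aff(\rr)$, which is K\"ahler yet fails it), so the hypothesis already excludes every non-abelian example classified in Theorem~\ref{kahler-abelian}; nonetheless, extracting $C\equiv0$ purely from the derived identity and Jacobi requires a careful orchestration of substitutions, and I expect this algebraic reduction to be the hardest part of the write-up, in contrast to the conceptual inputs (Lemma~\ref{first=0} and the $\beta$-chase) which are robust.
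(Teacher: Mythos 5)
Your derivation up to the trilinear identity is correct, and it is a genuinely different (and slicker) route to the key identity than the paper's: complexifying, the symmetry of $\beta$ in $(i,j)$ (torsion-freeness plus $[\fg^{1,0},\fg^{1,0}]=0$) against its antisymmetry in $(k,j)$ (metricity) does force $\beta\equiv 0$, hence $\nabla^g_{Jx}=J\nabla^g_x$, the closed formula $\nabla^g_xy=\tfrac12\bigl([x,y]+J[x,Jy]\bigr)$, and the identity $g([x,Jy],Jz)=g([y,z],x)-g([z,x],y)$. In fact your identity recovers the paper's central equation \eqref{vip} in one line (substitute $z\mapsto Jz$ and use $[Jz,x]=-[z,Jx]$), whereas the paper reaches \eqref{vip} by expanding $g(\nabla^g_xJy,z)=g(\nabla^g_xy,Jz)$ with the Koszul formula \eqref{LC} and cyclically summing.

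However, the proposal does not prove the theorem: it stops at the identity and offers only a plan ("careful orchestration of substitutions" plus Jacobi) to force $g([x,y],z)\equiv 0$, explicitly flagged as the unresolved main obstacle. That missing step is exactly where the paper's proof does its real work, and it is not a formal consequence of shuffling the trilinear form: from \eqref{vip} the paper proves $\fz=(\fg'_J)^{\perp}$ (via $(\ad_x)^2=0$ together with skew-symmetry of $\ad_x$ for $x\in(\fg'_J)^{\perp}$), gets the decomposition $\fg=(\fg'\oplus J\fg')\oplus\fz$ from Lemma \ref{cap}(v), identifies $\fg'\oplus J\fg'$ with $\aff(\mathcal A)$ where $\mathcal A=(\fg',\ast)$, $x\ast y=[Jx,y]$, satisfies $\mathcal A^2=\mathcal A$ (Corollary \ref{coro}), and only then concludes, by evaluating the cyclic identity \eqref{vip2} on a nonzero idempotent, whose existence comes from the structure theory of non-nilpotent commutative associative algebras. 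Your stronger identity would shorten this endgame --- taking $y,z\in\fg'$ (so $[y,z]=0$) and $x=Jy$ in it gives $g([Jy,z],y)=0$, so a nonzero idempotent $e$ of $(\fg',\ast)$ yields $g(e,e)=0$ immediately --- but you still need the associative-algebra structure on $\fg'$ and the existence of that idempotent, i.e.\ essentially the entire second half of the paper's argument, none of which is supplied. As written, the conclusion that $\fg$ is abelian is not established.
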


\begin{proof}
Let $\nabla$ be the Levi-Civita connection of $g$. From
$\nabla^1\equiv 0$ and equation \eqref{nabla1}, it follows that
$\nabla_x J =-J \nabla _x$ for every $x\in \mathfrak g$, therefore
\begin{equation}\label{paso1}
g\left(\nabla _xJ y,z \right) =-g\left(J\nabla _x y,z \right)
=g\left(\nabla _x y,Jz \right),\end{equation} for every $x,y,z \in
\fg$. Moreover, Lemma \ref{first=0} implies that
 $J$ is abelian.
Using equation \eqref{LC} for the first and third terms in \eqref{paso1} and the fact that $J$ is abelian, we get:
\begin{equation}\label{cyclic}
g\left([x, Jy],z \right)- g\left([x, y],Jz \right)+2 g\left([
y,Jz],x \right)+g\left([z,x],Jy \right)-g\left([Jz,x],y \right)=0.
\end{equation}
Computing the sum of  \eqref{cyclic} over cyclic permutations of
$x,y,z$ we obtain the following equation:
\begin{equation}\label{vip}
 g\left([x, Jy],z \right)+g\left([ y,Jz],x \right)+g\left([ z,Jx],y \right)=0,
\end{equation}
for every $x,y,z \in \fg$. The following conditions are
consequences of \eqref{vip}:
\begin{enumerate}
\item[(i)] $\fz\subseteq (\fg'_J)^{\perp}$.
\item[(ii)] $(\fg'_J)^{\perp}$ is abelian.
\item[(iii)] $\ad_x$ is skew-symmetric for all $x\in(\fg'_J)^{\perp}$.
\end{enumerate}

We prove next that we have an equality in (i). Let
$x\in(\fg'_J)^{\perp}$, then $J\ad_x=-\ad_xJ$. Indeed, since $J$
is abelian, we have $\ad_x J=-\ad_{Jx}$, and therefore it follows
from (iii) that both $\ad_x$ and $\ad_x J$ are skew-symmetric. By
taking the adjoint of $(\ad_x J)$, the claim easily follows.
Since $(\fg'_J)^{\perp}$ is abelian and $J$-stable (see (ii)), we
have
\[
(\ad_x)^2J=-\ad_x\ad_{Jx}=-\ad_{Jx}\ad_{x}=\ad_x J \ad_x =
-(\ad_x)^2 J
\]
which implies $(\ad_x)^2=0$. From (iii) above we obtain $\ad_x=0$,
that is, $x\in\fz$, and therefore
\begin{equation}\label{decom1}
\fz = (\fg'_J)^{\perp} \text{\qquad and \qquad }
\fg=(\fg'_J)\oplus \fz. \end{equation} It follows from Lemma
\ref{cap}(v) that $\fg'\cap J\fg'=\{0\}$. Hence,
\begin{equation}\label{direct}
\fg=(\fg'\oplus J\fg')\oplus \fz.
\end{equation}

Now, from Corollary \ref{coro} applied to $\fg'\oplus J\fg'$, we
obtain that $\fg=\aff(\mathcal A)\oplus \fz$, with $\mathcal
A^2=\mathcal A$. Furthermore, equation \eqref{vip} can be read as
\begin{equation}\label{vip2}
g(x\ast y,z)+g(y\ast z,x)+ g(z\ast x,y)=0, \qquad x,y,z\in \fg'.
\end{equation}
Let us suppose that $\mathcal A\neq \{0\}$, so that $\mathcal A$
is not nilpotent. There exists an idempotent element $e\in
\mathcal A$, $e\neq 0$, $e^2=e$. Setting $x=y=z=e$ in
\eqref{vip2}, one obtains $g(e,e)=0$, a contradiction. Therefore
$\mathcal A=\{0\}$ and this means that $\fg$ is abelian.
\end{proof}

\smallskip

\begin{rem}
{\rm A similar result for the Chern connection does not hold. See
\cite{DV} for results concerning the flatness of the Chern
connection on nilmanifolds.}
\end{rem}

\begin{rem}
{\rm In contrast with Theorem \ref{nabla-first}, we point out that every abelian double product can be endowed with a 
torsion-free connection $\nabla$ such that $\overline{\nabla}=0$.  }
\end{rem}

\medskip

If $G$ is a simply connected Lie group with Lie algebra $\fg$ and
$\Gamma \subset G$ is any discrete subgroup of $G$ then the
$(-)$-connection on $\fg$ induces a unique connection on $\Gamma
\backslash G$ such that the parallel vector fields are
$\pi$-related with the left invariant vector fields on $G$, where
$\pi:G\to\Gamma \backslash G$ is the projection. This induced
connection on $\Gamma \backslash G$, which will be denoted
$\nabla^0$, is complete, has trivial holonomy, its torsion is
parallel and $\pi$ is affine.

As a consequence of Theorem \ref{nabla-first}, we obtain

\begin{corol} Let $M=\Gamma \backslash G$ be a compact quotient of a
simply connected Lie group $G$ by a discrete subgroup $\Gamma$. If
$(J,g)$ is a left invariant Hermitian structure on $M$ such that
its first canonical connection $\nabla^1$ coincides with the
connection $\nabla^0$, then $M$ is diffeomorphic to a torus.
\end{corol}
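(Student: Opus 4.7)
The plan is to lift everything from the quotient $M=\Gamma\backslash G$ to $G$ and then to the Lie algebra $\fg$, so that the equality of connections on $M$ becomes a statement purely about left invariant data. Since the Hermitian structure $(J,g)$ on $M$ is left invariant, it is by definition induced from a left invariant Hermitian structure on $G$, and hence determines a Hermitian Lie algebra $(\fg,J,g)$. The first canonical connection is a natural construction from the Levi-Civita connection and $J$ (see \eqref{1-nabla}), so in the left invariant setting it corresponds to a bilinear map $\fg\times\fg\to\fg$, and its image on $M$ under the local isometry $\pi:G\to M$ is exactly the $\nabla^1$ of the statement. Similarly, $\nabla^0$ on $M$ comes by construction from the $(-)$-connection on $\fg$, which is the zero bilinear map.

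Next I would observe that $\pi:G\to M$ is a local diffeomorphism, so the hypothesis $\nabla^1=\nabla^0$ on $M$ pulls back to the same equality on $G$, and evaluating on left invariant vector fields gives $\nabla^1_x y = 0$ for all $x,y\in\fg$. That is, the first canonical connection associated with the Hermitian Lie algebra $(\fg,J,g)$ coincides with the $(-)$-connection of $\fg$.

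At this point Theorem~\ref{nabla-first} applies directly and forces $\fg$ to be abelian. Since $G$ is simply connected, this gives $G\cong\RR^n$ as Lie groups, and $M=\Gamma\backslash\RR^n$ is a compact quotient by a discrete subgroup, forcing $\Gamma$ to be a lattice of full rank; hence $M$ is diffeomorphic to the torus $T^n$.

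There is essentially no hard step here: the whole content has already been done in Theorem~\ref{nabla-first}. The only thing to be careful about is the passage from $M$ to $\fg$, namely verifying that both $\nabla^1$ and $\nabla^0$ on $M$ descend from left invariant connections on $G$ so that the equality is genuinely an equality of bilinear forms on $\fg$; this is immediate from the left invariance of $(J,g)$ and from the definition of $\nabla^0$ recalled just before the corollary.
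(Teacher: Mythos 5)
Your proof is correct and is exactly the intended argument: the paper states this corollary as an immediate consequence of Theorem \ref{nabla-first}, and your reduction (descend to the Hermitian Lie algebra, use that $\pi$ is a local diffeomorphism to turn $\nabla^1=\nabla^0$ on $M$ into $\nabla^1\equiv 0$ on $\fg$, conclude $\fg$ abelian, then $G\cong\RR^n$ and $\Gamma$ a full-rank lattice) is the same route. No gaps.
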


\medskip

Motivated by Theorem \ref{nabla-first}, we study next the more
general case of Hermitian structures on Lie algebras whose
associated first canonical connection is flat, when the complex
structure is abelian. The following lemma will be useful to prove some of the next results.

\begin{lem}\label{cap1}
Let $(\fg, J, g)$ be a Hermitian Lie algebra with $J$ abelian.  If the associated
first canonical connection $\nabla^1$ is flat, then $\fz \cap \fg '=\{0\}$.
\end{lem}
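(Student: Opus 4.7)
The plan is to reduce the flatness of $\nabla^{1}$ to a strong constraint on $\nabla^{g}_v$ for $v \in \fz \cap \fg'$, and then, using the abelian character of $J$, to force $v$ to lie in $(\fg')^{\perp}$.

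First I would invoke Lemma \ref{cap}(iii) to conclude that $\fg$ is $2$-step solvable, and combine this with the hypothesis that $\nabla^{1}$ is metric and flat so that Lemma \ref{flat-metric} applies. This gives $\nabla^{1}_w = 0$ for every $w \in \fg'$; in particular, for a fixed $v \in \fz \cap \fg'$ one has $\nabla^{1}_v = 0$.

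Second I would translate this back to the Levi--Civita connection via \eqref{1-nabla}. The identity $\nabla^{1}_v \equiv 0$ is equivalent to $\nabla^{g}_v y = J\nabla^{g}_v (Jy)$ for all $y$, which after substituting $y \mapsto Jy$ reads $\nabla^{g}_v (Jy) = -J\nabla^{g}_v y$, i.e.\ $\nabla^{g}_v$ anti-commutes with $J$. Moreover, because $v$ is central, the Koszul formula \eqref{LC} collapses to $2\,g(\nabla^{g}_v y, z) = -g([y,z], v)$ for all $y, z \in \fg$.

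Third I would combine the two. Pairing the anti-commutation identity with $z$ yields $g(\nabla^{g}_v (Jy), z) = g(\nabla^{g}_v y, Jz)$, and feeding the simplified Koszul formula into both sides gives $g([Jy, z], v) = g([y, Jz], v)$. But Lemma \ref{cap}(ii) --- a direct consequence of $J$ being abelian --- forces $[Jy, z] = -[y, Jz]$, so each side is the negative of the other and both must vanish: $g([y, Jz], v) = 0$ for every $y, z \in \fg$. Since $J$ is a linear bijection of $\fg$, the brackets $[y, Jz]$ range over $[\fg, \fg] = \fg'$, hence $v \in (\fg')^{\perp}$. Combined with $v \in \fg'$, this forces $v = 0$.

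The delicate point is choosing the correct translation of the flatness hypothesis. The two naive routes --- the torsion-type-$(1,1)$ identity for $T^1$ or direct manipulation of $d\omega$ in the definition of $\nabla^{1}$ --- both produce expressions involving $\nabla^{1}_{Jv}$, which need not vanish because $Jv \in \fz$ is not forced to lie in $\fg'$. The formulation via $\nabla^{g}_v$ is what lets the abelian-$J$ identity $[Jy, z] = -[y, Jz]$ close the argument cleanly.
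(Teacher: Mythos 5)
Your proof is correct and follows essentially the same route as the paper: both arguments use Lemma \ref{cap}(iii) to get solvability, apply Lemma \ref{flat-metric} to conclude $\nabla^1_v=0$ for $v\in\fz\cap\fg'$, and then expand $\nabla^1$ through \eqref{1-nabla} and the Koszul formula \eqref{LC}, with centrality of $v$, to force $g([y,z],v)=0$ for all $y,z$ and hence $v\in\fg'\cap(\fg')^{\perp}=\{0\}$. The only cosmetic difference is that the paper substitutes directly into its precomputed identity \eqref{eq-nabla1st} (where the abelian condition enters as $[Jy,Jz]=[y,z]$), whereas you feed the abelian condition in via the anti-commutation of $\nabla^g_v$ with $J$ together with Lemma \ref{cap}(ii).
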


\begin{proof}
It follows from equations \eqref{1-nabla} and \eqref{LC} and the fact that $J$ is abelian that:
\begin{equation}\label{eq-nabla1st}
g \left( \nabla^1_{x}y ,z \right)=  \frac{1}{4}\left(
g \left( [x,y],z\right)  +g \left( [z,x],y \right)
+ g \left( [x,Jy],Jz\right)+g \left( [Jz,x],Jy \right)- 2 g \left( [y,z],x\right)
 \right),
\end{equation}
for every $x,y,z \in \fg$. If $x\in \fz \cap \fg '$, using Lemma \ref{flat-metric}, equation \eqref{eq-nabla1st} becomes
\[ 0= g \left( \nabla^1_{x}y ,z \right)= - \frac12 g \left( [y,z],x\right),\]
therefore, $x\in (\fg ')^{\perp}$ which implies $x=0$.
\end{proof}

\medskip

As a straightforward consequence of Lemma \ref{cap1}, 
we show next that a nilpotent Lie algebra with an abelian complex
structure admits no Hermitian metric with flat first canonical
connection. In Theorem \ref{abeliana1} below we extend this result to an arbitrary Lie algebra carrying an abelian complex structure. 

\begin{prop}\label{2-pasos}
Let $(\fg, J, g)$ be a Hermitian Lie algebra with $\fg$ nilpotent
and $J$ abelian. If the associated first canonical connection
$\nabla^1$ is flat, then $\fg$ is abelian.
\end{prop}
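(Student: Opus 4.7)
The plan is to combine Lemma \ref{cap1} with a basic structural property of nilpotent Lie algebras. By Lemma \ref{cap1}, the flatness of $\nabla^1$ forces $\fz \cap \fg' = \{0\}$. So the proposition reduces to showing that a nilpotent Lie algebra $\fg$ with $\fz \cap \fg' = \{0\}$ must be abelian.

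For this, I would argue by contrapositive using the lower central series. Suppose $\fg' \neq \{0\}$. Since $\fg$ is nilpotent, the lower central series $\fg^{1}=\fg,\ \fg^{k+1}=[\fg,\fg^{k}]$ terminates: there is some largest $k\geq 2$ with $\fg^{k}\neq\{0\}$, and then $[\fg,\fg^{k}]=\fg^{k+1}=\{0\}$, so $\fg^{k}\subseteq\fz$. On the other hand $\fg^{k}\subseteq\fg^{2}=\fg'$ since $k\geq 2$. Hence $\fg^{k}$ is a nonzero subspace of $\fg'\cap\fz$, contradicting Lemma \ref{cap1}. Therefore $\fg'=\{0\}$, i.e., $\fg$ is abelian.

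The proof is essentially immediate once Lemma \ref{cap1} is in hand; the only real content is the observation about the last nonzero term of the lower central series of a nilpotent Lie algebra lying in $\fz\cap\fg'$, which is standard. There is no serious obstacle; the step to emphasize is simply invoking Lemma \ref{cap1} (which is where the flatness of $\nabla^{1}$ and the abelian nature of $J$ are used) and then passing to the lower central series to extract the contradiction.
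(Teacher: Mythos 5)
Your proof is correct and follows essentially the same route as the paper: the authors also invoke Lemma \ref{cap1} to get $\fz\cap\fg'=\{0\}$ and then observe that for a $k$-step nilpotent $\fg$ with $k\geq 2$ the last nonzero term of the descending central series lies in $\fz\cap\fg'$, giving the same contradiction.
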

\begin{proof}
Consider the descending central series of $\fg$  defined by
$\fg ^0 = \fg ,  \; \fg ^i=[\fg , \fg ^{i-1}], \; i\geq 1$. The Lie algebra $\fg$ is  $k$-step nilpotent if  $\fg^k=\{0\}$ and $\fg^{k-1}\neq \{0\}$.

Let us suppose that
 $\fg$ is  $k$-step nilpotent with $k\geq 2$. Then $\fg^{k-1}\subset \fz \cap \fg '$ and it follows from Lemma \ref{cap1} that $\fg^{k-1}=\{0\}$, a contradiction. Therefore, $k=1$ and $\fg$ is abelian.
\end{proof}

\medskip

\begin{corol}Let $N= \Gamma\backslash G$ be a nilmanifold  with a left invariant Hermitian structure
$(J,g)$ such that $J$ is abelian. If the associated first canonical connection $\nabla^1$ is flat, then $N$
is diffeomorphic to a torus.
\end{corol}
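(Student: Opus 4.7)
The plan is to reduce the statement on the nilmanifold to the Lie algebra statement provided by Proposition \ref{2-pasos}, and then conclude by an elementary topological argument.

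First I would observe that since $N=\Gamma\backslash G$ is a nilmanifold, the simply connected Lie group $G$ is nilpotent, and therefore its Lie algebra $\fg$ is nilpotent. A left invariant Hermitian structure $(J,g)$ on $N$ is, by definition, induced by a left invariant Hermitian structure on $G$, which in turn corresponds uniquely to a Hermitian structure on the Lie algebra $\fg$ (still denoted $(J,g)$) with $J$ abelian. Moreover, the associated first canonical connection $\nabla^{1}$ on $N$ comes from the corresponding left invariant connection on $G$, and hence from the first canonical connection of $(\fg,J,g)$. Since the projection $G\to N$ is a local isometry and the covering map preserves the canonical connection, flatness of $\nabla^{1}$ on $N$ is equivalent to flatness of the first canonical connection of the Hermitian Lie algebra $(\fg,J,g)$.

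Once this translation is carried out, Proposition \ref{2-pasos} applies directly: a nilpotent Lie algebra with an abelian complex structure whose associated first canonical connection is flat must be abelian. Hence $\fg$ is abelian, which means that the simply connected Lie group $G$ is isomorphic to $\RR^{2n}$ for some $n$.

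Finally I would conclude the topological step. Since $N=\Gamma\backslash G$ is compact, $\Gamma$ is a discrete cocompact subgroup of the abelian group $G\cong\RR^{2n}$, so $\Gamma$ is a lattice of full rank in $\RR^{2n}$, and therefore $N$ is diffeomorphic to the torus $T^{2n}$. I expect no real obstacle here; the only point that needs a brief remark is the equivalence between flatness of $\nabla^{1}$ on $N$ and flatness at the Lie algebra level, but this is immediate from the fact that the first canonical connection is built from the Levi-Civita connection and the complex structure, both of which are left invariant and descend to $N$ through the local isometry $\pi\colon G\to \Gamma\backslash G$.
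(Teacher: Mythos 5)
Your proposal is correct and follows exactly the route the paper intends (the corollary is stated without proof precisely because it is the immediate consequence of Proposition \ref{2-pasos}): pass to the Lie algebra via left invariance, conclude $\fg$ is abelian, and note that a compact quotient of $\RR^{2n}$ by a discrete (hence full-rank lattice) subgroup is a torus. No gaps.
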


\medskip

The next result shows that the nilpotency assumption can be dropped in  Proposition \ref{2-pasos}. In other words, there are no non-abelian Hermitian Lie algebras with abelian complex structure and flat first canonical connection.

\medskip

\begin{teo}\label{abeliana1}
Let $(\fg, J, g)$ be a Hermitian Lie algebra with $J$ abelian. If the associated 
first canonical connection $\nabla^1$ is flat, then $\fg$ is abelian.
\end{teo}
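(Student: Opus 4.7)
The plan is to argue by contradiction and to mimic, as closely as possible, the strategy of Theorem~\ref{nabla-first}. Suppose $\fg$ is not abelian, so $\fg'\neq\{0\}$; the objective is to produce a nonzero $e\in\fg'$ with $g(e,e)=0$, contradicting positive definiteness of $g$.

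By flatness and Lemma~\ref{flat-metric}, $\nabla^1_v=0$ for every $v\in\fg'$, and $\{\nabla^1_x:x\in\fg\}$ is an abelian family of skew-symmetric endomorphisms that commute with $J$. Combined with Lemma~\ref{cap1} this yields $\fz\cap\fg'=\{0\}$. On $\fg'$ I would introduce the product $a\ast b:=[Ja,b]$; using that $J$ is abelian, that $\fg'$ is abelian and the Jacobi identity, $\ast$ is commutative and associative, so $\mathcal{A}=(\fg',\ast)$ is a commutative associative $\rr$-algebra.

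The central identity comes from substituting $x=v$, $y=Jv$, $z=v$ with $v\in\fg'$ into \eqref{eq-nabla1st}. Since $\nabla^1_v(Jv)=0$, a short computation using $J^2=-I$, $[v,v]=0$ and $[v,Jv]=-[Jv,v]$ collapses the right-hand side to $-4\,g(v\ast v,v)$, hence $g(v\ast v,v)=0$ for every $v\in\fg'$. Polarizing yields the trilinear identity, formally identical to \eqref{vip2}:
\[
g(a\ast b,c)+g(b\ast c,a)+g(c\ast a,b)=0, \qquad a,b,c\in\fg'.
\]
Once a nonzero idempotent $e\in\mathcal{A}$ (with $e\ast e=e$) is exhibited, setting $a=b=c=e$ gives $3g(e,e)=0$, the desired contradiction.

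To produce the idempotent I would establish, following the pattern of the proof of Theorem~\ref{nabla-first}, the orthogonal $J$-stable decomposition $\fg=\fg'_J\oplus\fz$ together with $\fg'\cap J\fg'=\{0\}$. Corollary~\ref{coro} applied to $\fg'_J$ then provides a holomorphic isomorphism $\fg'_J\cong\aff(\mathcal{A})$ with $\mathcal{A}^2=\mathcal{A}$, and standard structure theory of finite-dimensional commutative associative $\rr$-algebras with $\mathcal{A}^2=\mathcal{A}$ and $\mathcal{A}\neq\{0\}$ supplies a nonzero idempotent, finishing the proof.

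The principal obstacle is precisely the derivation of $\fg=\fg'_J\oplus\fz$ and $\fg'\cap J\fg'=\{0\}$ from the flatness of $\nabla^1$ alone. In Theorem~\ref{nabla-first} the hypothesis $\nabla^1\equiv 0$ delivered the clean cyclic identity \eqref{vip}, and hence properties (i)--(iii) used there. Under flatness we only control $\nabla^1$ on $\fg'$, which yields the identity
\[
0=g([v,y],z)+g([z,v],y)+g([v,Jy],Jz)+g([Jz,v],Jy)-2g([y,z],v)
\]
for $v\in\fg'$ and arbitrary $y,z\in\fg$. The analogs of (i)--(iii) must be extracted from this family of identities in conjunction with the skew-symmetry of each $\nabla^1_x$, the commutativity of the family, $\nabla^1 J=J\nabla^1$, and the $(1,1)$-type of the torsion $T^1$; upgrading $(\ad_x)^2=0$ to $\ad_x=0$ for $x\in(\fg'_J)^\perp$ then produces the desired decomposition exactly as in the closing part of the proof of Theorem~\ref{nabla-first}.
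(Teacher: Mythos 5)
Your opening moves are sound: Lemma \ref{flat-metric} gives $\nabla^1_v=0$ for all $v\in\fg'$, the product $a\ast b=[Ja,b]$ on $\fg'$ is indeed commutative and associative (this uses only that $J$ is abelian and that $\fg'$ is abelian), and the substitution $x=z=v$, $y=Jv$ in \eqref{eq-nabla1st} does yield $g(v\ast v,v)=0$ (the right-hand side equals $-g(v\ast v,v)$, not $-4g(v\ast v,v)$, but that is harmless), so polarization gives the cyclic identity $g(a\ast b,c)+g(b\ast c,a)+g(c\ast a,b)=0$ on $\fg'$ --- a legitimate shortcut to the analogue of \eqref{vip2}. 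Granting a nonzero idempotent of $(\fg',\ast)$, the contradiction would follow as in Theorem \ref{nabla-first}.

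However, the proof is not complete: the step you yourself call ``the principal obstacle'' --- producing that idempotent, i.e.\ showing $\mathcal A$ is not nilpotent via $\mathcal A^2=\mathcal A$, which you propose to get from $\fg=\fg'\oplus J\fg'\oplus\fz$ and $\fg'\cap J\fg'=\{0\}$ --- is exactly where all the work of the theorem lies, and the route you sketch for it does not go through. The mechanism of Theorem \ref{nabla-first} rests on the identity \eqref{vip} for \emph{arbitrary} $x,y,z$, which uses $\nabla^1_x=0$ for every $x\in\fg$; under mere flatness, Lemma \ref{flat-metric} only kills $\nabla^1_v$ for $v\in\fg'$, so every identity at your disposal has its first slot in $\fg'$, and the properties (i)--(iii) of that proof (in particular the skew-symmetry of $\ad_x$ for $x\in(\fg'_J)^{\perp}$) cannot be ``extracted'' in the same way. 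The paper fills this hole with two genuinely new lemmas: Lemma \ref{proper} (both $\fg'_J$ and $(\fg'_J)^{\perp}$ are abelian, and $\nabla^1_vw=0$ for $v,w\in(\fg'_J)^{\perp}$) and Lemma \ref{previous}, which shows that flatness forces $\fg'$ to be $J$-stable, i.e.\ $J\fg'=\fg'$ --- the opposite of the splitting $\fg'\cap J\fg'=\{0\}$ you aim to establish; it then derives \eqref{j-menos} and the skew-symmetry of $\ad_z|_{\fg'}$ for $z\in(\fg')^{\perp}$, concludes that $\nabla^1\equiv0$, and only then invokes Theorem \ref{nabla-first}. None of that content (the norm argument \eqref{norma} via the Jacobi identity, the $J$-stability of $\fg'$, the final computation giving $\nabla^1_z=0$ on mixed arguments) appears in your proposal, so the decisive part of the theorem remains unproved.
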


\smallskip

The proof of this theorem will follow from Lemmas \ref{proper} and \ref{previous}, which are proved next.

\smallskip

\begin{lem}\label{proper}
Let $(\fg, J, g)$ be a Hermitian Lie algebra with $J$ abelian. If the associated 
first canonical connection $\nabla^1$ is flat, then both, $\fg'_J$ and $(\fg'_J)^{\perp}$ are  abelian. Moreover, if
$v,w\in (\fg'_J)^{\perp}$ then $\nabla^1_vw=0$. 
\end{lem}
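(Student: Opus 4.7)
My plan is to extract one master identity from the vanishing $\nabla^1_v=0$ for $v\in\fg'$ (which is Lemma \ref{flat-metric} applied to the flat metric connection $\nabla^1$) and then to specialize it in two different ways. Combining $g(\nabla^1_v y,z)=0$ with formula \eqref{eq-nabla1st} I obtain, for every $v\in\fg'$ and every $y,z\in\fg$,
\[
2g([y,z],v)=g([v,y],z)+g([z,v],y)+g([v,Jy],Jz)+g([Jz,v],Jy). \quad (\ast)
\]
This is the workhorse of the proof; everything else follows by choosing $y$ and $z$ judiciously.

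To show that $(\fg'_J)^{\perp}$ is abelian, I take $y=a$, $z=b$ with $a,b\in(\fg'_J)^{\perp}$ in $(\ast)$. Since $\fg'_J$ is $J$-stable and $J$ is a $g$-isometry, $(\fg'_J)^{\perp}$ is also $J$-stable, so $Ja, Jb$ lie in $(\fg'_J)^{\perp}$ as well. Each of the four brackets on the right-hand side of $(\ast)$ belongs to $\fg'\subseteq\fg'_J$ (as $\fg'$ is an ideal) and is paired against a vector in $(\fg'_J)^{\perp}$, so the whole right-hand side vanishes. Thus $g([a,b],v)=0$ for every $v\in\fg'$; since $[a,b]\in\fg'$ and $g|_{\fg'}$ is positive definite, $[a,b]=0$.

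The main effort is showing that $\fg'_J$ itself is abelian. By Lemma \ref{cap} both $\fg'$ and $J\fg'$ are already abelian subalgebras, so the only missing relation is $[Jv,a]=0$ for $v,a\in\fg'$. Specializing $(\ast)$ with $v,a,w\in\fg'$ and $y=Ja$, $z=w$, and using repeatedly that $J$ is abelian (so every bracket appearing can be rewritten in the form $[Jx,y]$), I expect to land on a relation of the form $3\alpha+\beta=0$, where $\alpha$ and $\beta$ denote the values of the trilinear form $\psi(x,y,z):=g([Jx,y],z)$ on the two $3$-orbits of cyclic permutations of $(v,a,w)$. Applying $(\ast)$ a second time after swapping the roles of $v$ and $w$ (both of which remain in $\fg'$) yields the companion relation $\alpha+3\beta=0$. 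The resulting $2\times 2$ system is nondegenerate, forcing $\alpha=\beta=0$; hence $\psi\equiv 0$ on $\fg'\times\fg'\times\fg'$. As $[Jv,a]\in\fg'$ and $g|_{\fg'}$ is positive definite, this gives $[Jv,a]=0$, completing the proof that $\fg'_J$ is abelian.

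Finally, for the ``moreover'' statement, I apply \eqref{eq-nabla1st} with $x=v$, $y=w\in(\fg'_J)^{\perp}$ and arbitrary $z\in\fg$. The previous paragraph kills $[v,w]$ and $[v,Jw]$, while each of the remaining terms $g([z,v],w)$, $g([Jz,v],Jw)$ and $g([w,z],v)$ vanishes by the same bracket-in-$\fg'$ against vector-in-$(\fg'_J)^{\perp}$ argument as before. Hence $g(\nabla^1_v w,z)=0$ for every $z$, and so $\nabla^1_v w=0$. The obstacle I anticipate lies entirely in the third paragraph: producing two linearly independent specializations of $(\ast)$ whose simultaneous solution forces both cyclic-orbit values of $\psi$ to vanish, rather than a single relation that only constrains their sum.
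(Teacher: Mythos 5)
Your master identity $(\ast)$, your argument that $(\fg'_J)^{\perp}$ is abelian, and your ``moreover'' step are all correct and coincide with the paper's proof (which likewise feeds $\nabla^1_v=0$ for $v\in\fg'$, coming from Lemma \ref{flat-metric} and the $2$-step solvability of $\fg$, into \eqref{eq-nabla1st}). The genuine gap is exactly where you flagged it: the third paragraph, which carries the entire content of the lemma ($[J\fg',\fg']=0$), is not a proof but an anticipated shape of a computation, and the anticipated shape is wrong. Carrying out your own substitution ($v,a,w\in\fg'$, $y=Ja$, $z=w$ in $(\ast)$), and using that $\fg'$ is abelian (so $[v,a]=[w,v]=0$) and that $J$ is abelian (so $[v,Ja]=-[Jv,a]$), one gets
\[
2\,g([Ja,w],v)=-g([Jv,a],w)-g([Jw,v],a),
\]
that is, $\psi(v,a,w)+2\,\psi(a,w,v)+\psi(w,v,a)=0$ for $\psi(x,y,z):=g([Jx,y],z)$. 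This relation lives entirely inside a single cyclic orbit of $(v,a,w)$; it never couples the two orbits, so no system of the form $3\alpha+\beta=0$, $\alpha+3\beta=0$ can arise, and moreover ``the value of $\psi$ on a cyclic orbit'' is not even well defined a priori, since cyclic invariance of $\psi$ is exactly what is at stake. Swapping $v$ and $w$ only reproduces the analogous relation in the reversed orbit, not an independent mixed relation.

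The good news is that the relation you do obtain already suffices, so the gap is fillable along your own lines: applying it to the three cyclic relabelings of $(v,a,w)$ gives a linear system for $p=\psi(v,a,w)$, $q=\psi(a,w,v)$, $r=\psi(w,v,a)$ with circulant coefficient pattern $(1,2,1)$, whose determinant is $4$, forcing $p=q=r=0$. (The paper packages the same computation differently: it first compares the relation with one cyclic shift of itself to conclude that $\ad_{Jz}|_{\fg'}$ is symmetric for $z\in\fg'$, then substitutes this back to conclude it is also skew-symmetric, hence $\ad_{Jz}|_{\fg'}=0$.) Since $[Jx,y]\in\fg'$ and $g$ is positive definite, $\psi\equiv 0$ on $\fg'\times\fg'\times\fg'$ gives $[J\fg',\fg']=0$, which together with Lemma \ref{cap}(iii),(iv) yields that $\fg'_J$ is abelian. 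With that computation written out, your proof becomes essentially the paper's.
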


\begin{proof} 
For any $x,y,z \in \fg'$, using \eqref{eq-nabla1st} together
with Lemma \ref{flat-metric} and the fact that $\fg'$ is abelian
we obtain:
\[
0=g\left(\nabla^1_xy, Jz  \right)= \frac14\left( g([Jz,x],y)-g([x,Jy],z)-2g([y,Jz],x) \right),
\]
or equivalently:
\begin{equation}\label{star}  2g([y,Jz],x)=-g([z,Jx],y)-g([x,Jy],z).
\end{equation}
Using \eqref{star} we compute
\[ 2g([y,Jz],x) - 2g([z,Jx],y)= -g([z,Jx],y)+g([y,Jz],x), \]
which implies $g([y,Jz],x) =g([z,Jx],y)$ for every $x,y,z \in
\fg'$. Therefore,  $\ad_{Jz }:\fg' \to \fg'$ is
symmetric for all $z\in \fg'$. In particular, using that
$\ad_{Jx}$ is symmetric in equation \eqref{star} we obtain
\[ 2g([y,Jz],x)=-2 g([x,Jy],z), \quad \text{equivalently,} \quad g([Jy,z],x)=-g([Jy,x],z), 
\]
that is, $\ad_{Jy}$ is skew-symmetric for all $y\in \fg'$.
Hence, $\ad_{Jy}|_{\fg'}=0$  for all $y\in \fg'$ which implies that $\fg'_J$ is abelian. 

Now taking $x\in \fg', \; y,z \in (\fg'_J)^{\perp}$ in \eqref{eq-nabla1st} together
with Lemma \ref{flat-metric} and the fact that $(\fg'_J)^{\perp}$ is $J$-stable we obtain:
\[
0=g\left(\nabla^1_xy, z  \right)= -\frac12 g([y,z],x),
\]
and therefore, $(\fg'_J)^{\perp}$ is abelian.

Since $(\fg'_J)^{\perp}$ is abelian and $J$-stable, it follows from \eqref{eq-nabla1st} that $\nabla^1_vw=0$ whenever 
$v,w\in (\fg'_J)^{\perp}$.
\end{proof}

\medskip

\begin{lem}\label{previous}
Let $(\fg, J, g)$ be a Hermitian Lie algebra with $J$ abelian. If the associated 
first canonical connection $\nabla^1$ is flat then $\fg'$ is $J$-stable and therefore $\fg'_J=\fg'$.
\end{lem}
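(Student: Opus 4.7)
My plan is to argue by contradiction. Assume that $\fg'$ is not $J$-stable. Then the $g$-orthogonal complement of $\fg'$ inside $\fg'_J$, call it $W$, is a nonzero subspace, yielding the orthogonal decomposition $\fg'_J = \fg' \oplus W$. Writing $\fn = (\fg'_J)^{\perp}$, we have $\fg = \fg' \oplus W \oplus \fn$, and the task becomes to derive a contradiction from $W \neq \{0\}$.

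The first step is to pin down the block structure of $\nabla^1_w$ for $w \in W$ with respect to the splitting $\fg = \fg'_J \oplus \fn$. Using equation \eqref{eq-nabla1st}, the fact that $\fg'_J$ is abelian (Lemma \ref{proper}), and the key observation that every bracket $[x,y]$ lies in $\fg'$, which is $g$-orthogonal to $W$, a direct inspection of the five terms on the right of \eqref{eq-nabla1st} shows that they all vanish when $y,z \in \fg'_J$, and again when $y,z \in \fn$. Consequently, $\nabla^1_w(\fg'_J) \subseteq \fn$ and $\nabla^1_w(\fn) \subseteq \fg'_J$; that is, $\nabla^1_w$ is block off-diagonal and, by skew-symmetry, its two off-diagonal blocks are mutually $g$-adjoint. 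Combined with Lemma \ref{flat-metric} (which gives $\nabla^1_u = 0$ for every $u \in \fg'$), this determines $\nabla^1$ completely on $\fg'_J$.

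The second step is to exploit the type $(1,1)$ condition on the torsion $T^1$. Applying $T^1(Jw, Ju) = T^1(w, u)$ with $w \in W$ and $u \in \fg'$, and using $\nabla^1_u = 0$ together with $[w, u] = [Jw, Ju] = 0$ (both pairs lie in the abelian ideal $\fg'_J$), one decomposes $Jw = a + w_0$ and $Ju = b + u_0$ with $a,b \in \fg'$ and $w_0, u_0 \in W$ to obtain an identity of the form
\[
\nabla^1_w u \;=\; \nabla^1_{w_0}(Ju) \,-\, \nabla^1_{u_0}(Jw)
\]
in $\fn$, recovering $\nabla^1_w u$ from operators $\nabla^1_{w'}$ indexed again by $w' \in W$. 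In parallel, flatness of $\nabla^1$ together with $\nabla^1_{[x,y]} = 0$ (since all brackets lie in $\fg'$) gives $[\nabla^1_x, \nabla^1_y] = 0$ for every $x, y \in \fg$, so the family $\{\nabla^1_w : w \in W\} \cup \{\nabla^1_v : v \in \fn\}$ consists of mutually commuting skew-Hermitian endomorphisms compatible with the above block structure.

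The main obstacle is to close the argument and force $W = \{0\}$ by combining the off-diagonal block form of $\nabla^1_w$, the commutation relations, and the identity derived from the $(1,1)$-torsion condition. The expected route is to show that these three constraints together imply that every $w \in W$ satisfies $\ad_w = 0$ (so $w \in \fz$) and simultaneously must lie in the commutator ideal $\fg'$; by Lemma \ref{cap1}, $\fz \cap \fg' = \{0\}$, whence $w = 0$, contradicting the choice of $w$. The delicate point is to unwind the recursion tying $\nabla^1_w u$ to operators indexed by $W$ itself; that recursion should collapse only under the hypothesis that $W$ is trivial.
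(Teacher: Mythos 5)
Your proposal is a setup, not a proof: the two preparatory observations are correct (for $w$ in the orthogonal complement $W$ of $\fg'$ in $\fg'_J$, the operator $\nabla^1_w$ is indeed block off-diagonal with respect to $\fg=\fg'_J\oplus(\fg'_J)^{\perp}$, and the $(1,1)$-torsion identity $\nabla^1_w u=\nabla^1_{w_0}(Ju)-\nabla^1_{u_0}(Jw)$ does follow from $\nabla^1_{\fg'}=0$ and the abelianity of $\fg'_J$), but you explicitly leave open the step that actually forces $W=\{0\}$, and nothing in what you set up makes that step automatic. Moreover, your envisioned endgame is structurally off: showing $w\in\fg'$ would kill $w$ trivially (since $W\perp\fg'$) without any appeal to Lemma \ref{cap1}, while showing only $\ad_w=0$, i.e.\ $W\subseteq\fz$, is not by itself a contradiction --- the paper reaches exactly this intermediate conclusion and still needs a further argument (using that $\fz$ is $J$-stable together with $\fz\cap\fg'=\{0\}$ from Lemma \ref{cap1}) to deduce that $W$ is $J$-stable, hence that $\fg'$ is $J$-stable, and only then that $\fg'_J=\fg'$ forces $W=\{0\}$.

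The missing idea is a concrete bracket-killing computation, and the paper's is short: evaluate $0=g\bigl(\nabla^1_x y,w\bigr)$ with $x\in\fg'$, $y\in W$, $w\in(\fg'_J)^{\perp}$ (using $\nabla^1_x=0$ from Lemma \ref{flat-metric}) to get $2g([w,y],x)=-g([Jw,x],Jy)$; then set $x=[w,y]$, use the Jacobi identity together with $[w,Jw]=0$ (abelianity of $(\fg'_J)^{\perp}$, Lemma \ref{proper}) to rewrite the right-hand side, and replace $w$ by $Jw$; adding the two identities gives $2\Vert[w,y]\Vert^2+2\Vert[Jw,y]\Vert^2=0$, hence $[w,y]=0$ and therefore $W\subseteq\fz$. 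Your route keeps all test vectors attached to operators $\nabla^1_{w'}$ with $w'\in W$, which is precisely why you end up with a self-referential ``recursion'' that you cannot collapse; the paper instead exploits the already-known vanishing $\nabla^1_x=0$ for $x\in\fg'$ tested against mixed triples $(x,y,w)$, which produces a genuine norm identity. Without that (or some substitute) your argument does not establish the lemma.
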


\begin{proof}
Let $\fv$ denote the orthogonal complement of $\fg'$ in $\fg'_J$, so that $\fg'_J=\fg'\oplus \fv$. Taking $x\in\fg',\, y\in\fv,\, w\in (\fg'_J)^{\perp}$ and using \eqref{eq-nabla1st} together with Lemma \ref{flat-metric} we obtain
\[ 0=g\left(\nabla^1_xy,w \right) = g([Jw,x],Jy)-2g([y,w],x), \]
and therefore
\[ 2g([w,y],x)= -g([Jw,x],Jy). \]
In particular, if $x=[w,y]$, we have
\begin{equation}\label{norma} 2 \Vert [w,y]\Vert^2=-g([Jw,[w,y]],Jy)=-g([w,[Jw,y]], Jy)\end{equation}
using Jacobi identity and Lemma \ref{proper}. Replacing $w$ by $Jw$ in \eqref{norma}, we get
\[ 2 \Vert [Jw,y]\Vert^2=g([Jw,[w,y]], Jy)=g([w,[Jw,y]], Jy), \]
which together with \eqref{norma} implies $[w,y]=0$. Since $(\fg'_J)^{\perp}$ is abelian, it follows that $y\in \fz$, hence $\fv\subset \fz$. Now, $Jy\in\fg'_J \cap \fz$, hence we can write $Jy=x+v$, with $x\in\fg',\, v\in \fv$. It follows that $x\in \fz$, and using Lemma \ref{cap1} we obtain $x=0$, so that $\fv$ is $J$-invariant. Since $J$ is orthogonal, we have that $\fg'$ is also $J$-invariant, thus $\fv=0$ and $J\fg'=\fg'$.
\end{proof}

\medskip

\begin{proof}[Proof of Theorem \ref{abeliana1}]
It follows from Lemma \ref{previous} that $\fg'_J=\fg'$, and therefore there is an orthogonal decomposition $\fg=\fg'\oplus (\fg')^{\perp}$, where both subalgebras are 
$J$-stable and abelian (Lemma \ref{proper}). Taking $x,y\in \fg', \; z\in (\fg')^{\perp}$ in \eqref{eq-nabla1st} we 
obtain
\begin{equation}\label{otra}
g([z,x],y)-g([z,Jx],Jy)+2g([z,y],x)=0. 
\end{equation}
Changing $x$ by $Jx$ and $y$ by $Jy$ in the equation above, and adding both equations we get
\[
g([z,y],x)+ g([z,Jy],Jx) =0.
\]
Since this holds for every $x\in \fg'$ and $\fg'$ is $J$-invariant, we obtain:
\begin{equation}\label{j-menos}
J[z,y]=-[z,Jy], \quad z\in (\fg')^{\perp}, \, y\in\fg'. 
\end{equation}
Using this fact in \eqref{otra}, it follows that 
\[ g([z,x],y)+g([z,y],x)=0,\]
and therefore $\ad_z:\fg'\to\fg'$ is skew-symmetric for any $z\in (\fg')^{\perp}$.

Let us take now $z\in (\fg')^{\perp}$ and $x,y\in \fg'$ and compute
\begin{align*}
4g(\nabla^1_zx,y) & =  g([z,x],y)+g([y,z],x)+g([z,Jx], Jy)+g([Jy,z],Jx)-2g([x,y],z) \\
                 & =  2 g([z,x],y) + 2g([z,Jx], Jy) \\
                 & = 0,
\end{align*}
where we used that $\ad_z$ is skew-symmetric in the first equality and \eqref{j-menos} in the second equality. Combining
this with the fact that $\nabla^1_vw=0$ for $v,w\in (\fg')^{\perp}$ (Lemma \ref{proper}), we obtain that $\nabla^1_z=0$ 
for any $z\in (\fg')^{\perp}$. Since $\nabla^1_x=0$ for any $x\in \fg'$ (Lemma \ref{flat-metric}), it follows that 
$\nabla^1=0$, and therefore $\fg$ is abelian, according to Theorem \ref{nabla-first}.
\end{proof}

\medskip

\begin{rem} {\rm According to Theorem \ref{abeliana1}, there are no non-abelian Hermitian Lie algebras with abelian complex structure and flat first canonical connection. On the other hand, 
we note that there exist Hermitian Lie algebras with flat first canonical connection and non-abelian
complex structure. Indeed, in \cite{BDF} it was shown that any even-dimensional Lie algebra with a flat metric $g$
admits a compatible complex structure $J$ such that $(J,g)$ is K\"ahler, following results of Milnor in \cite{Mil}. As in this case we have that
$\nabla^1=\nabla^g$, it follows that $\nabla^1$ is flat. Moreover, $J$ is not abelian, unless the Lie algebra is
abelian (see Theorem \ref{kahler-abelian}). It would be interesting to find examples of flat first canonical connections in the non-K\"ahler case.} 
\end{rem}

\

\end{document}